\documentclass[11pt]{amsart}

\usepackage[T1]{fontenc}
\usepackage[utf8]{inputenc}
\usepackage{lmodern}
\usepackage{microtype}
\usepackage{amsmath,amssymb,amsthm,mathtools}
\usepackage{hyperref}
\usepackage{enumitem}
\usepackage{xcolor}

\hypersetup{
  colorlinks=true,
  linkcolor=[rgb]{0.10,0.10,0.45},
  citecolor=[rgb]{0.10,0.10,0.45},
  urlcolor=[rgb]{0.10,0.10,0.45}
}

\numberwithin{equation}{section}

\newtheorem{theorem}{Theorem}[section]
\newtheorem{proposition}[theorem]{Proposition}
\newtheorem{corollary}[theorem]{Corollary}
\newtheorem{lemma}[theorem]{Lemma}
\newtheorem{remark}[theorem]{Remark}
\newtheorem{example}[theorem]{Example}
\theoremstyle{definition}

\newcommand{\C}{\mathbb C}
\newcommand{\Prob}{\mathrm{Prob}}
\newcommand{\T}{\mathrm T}
\newcommand{\Ad}{\mathrm{Ad}}
\newcommand{\Aut}{\mathrm{Aut}}
\newcommand{\Inn}{\mathrm{Inn}}
\newcommand{\id}{\mathrm{id}}
\newcommand{\supp}{\mathrm{supp}}
\newcommand{\weakto}{\stackrel{w^*}{\longrightarrow}}
\newcommand{\ext}{\partial_{\mathrm e}}
\newcommand{\la}{\langle}
\newcommand{\ra}{\rangle}
\newcommand{\norm}[1]{\left\lVert #1 \right\rVert}
\newcommand{\normtwo}[2]{\left\lVert #1 \right\rVert_{2,#2}}
\newcommand{\abs}[1]{\left| #1 \right|}
\newcommand{\set}[1]{\left\{ #1 \right\}}
\newcommand{\restr}[2]{#1\!\upharpoonright_{#2}}
\newcommand{\Rad}{\mathrm R_{\mathrm a}}

\title[Invariant traces and relative property (T)]{Invariant trace simplices and relative property (T)}
\author{Raz Slutsky}
\address{Mathematical Institute, University of Oxford, Woodstock Road, Oxford OX2 6GG, United Kingdom}
\email{raz.slutsky@maths.ox.ac.uk}
\date{}

\begin{document}

\begin{abstract}
Let $\alpha\colon G\curvearrowright A$ be an action of a countable discrete group on a separable unital $C^*$-algebra. We study the simplex $\T(A)^G$ of $G$-invariant traces and ask when it is Bauer. Our main result is a noncommutative version of the Glasner--Weiss theorem: if $(G,H)$ has relative property~\textup{(T)} and the $H$-action on the von Neumann algebra of every extremal invariant trace is ergodic, that is, has only scalar fixed points, then $\T(A)^G$ is Bauer. 
We give criteria for the ergodicity hypothesis and apply them to certain quasi-local permutation actions, generalized Bernoulli actions, traces on group $C^*$-algebras, and reduced crossed products. In particular, if $G$ is infinite, has property~\textup{(T)}, and trivial amenable radical, then $C_r^*(\Delta\wr G)$ has Bauer trace simplex for every countable discrete group $\Delta$.
\end{abstract}

\maketitle

\section{Introduction}

A theorem of Glasner and Weiss says that if a countable discrete group $G$ with Kazhdan's property~\textup{(T)} acts by homeomorphisms on a compact metrizable space $X$, then the simplex $\Prob(X)^G$ of $G$-invariant Borel probability measures is Bauer, that is, it has closed extreme boundary, whenever it is nonempty \cite{GW}. Since
\[
\Prob(X)=\T(C(X)),
\]
this is exactly a statement about invariant traces on a commutative $C^*$-algebra. The purpose of this paper is to develop a noncommutative counterpart.

If $\alpha\colon G\curvearrowright A$ is an action on a separable unital $C^*$-algebra, we write
\[
\T(A)^G:=\set{\tau\in \T(A):\tau\circ \alpha_g=\tau\text{ for all }g\in G}.
\]

In the noncommutative setting, property~\textup{(T)} alone cannot force $\T(A)^G$ to be Bauer. Indeed, under the trivial action one has $\T(A)^G=\T(A)$, and trace simplices of separable unital $C^*$-algebras can be highly non-Bauer. For example, $\T(C^*(F_d))$, and more generally the trace spaces of most full free products, are Poulsen simplices\footnote{A Poulsen simplex is the unique simplex with the property that its extreme points are dense.}  \cite{OSV,ISV}. In fact, every metrizable Choquet simplex occurs as the trace space of a simple AF algebra \cite{Blackadar}. Thus any noncommutative Glasner--Weiss theorem must incorporate genuinely dynamical input.

For $\tau\in \T(A)^G$, let $M_\tau=\pi_\tau(A)''$ be the tracial von Neumann algebra in the GNS representation of $\tau$. Since $\tau$ is $G-$invariant, $\alpha$ extends to a trace-preserving action on $M_\tau$. If $H\leq G$, we say that the restricted $H$-action on $M_\tau$ is \emph{ergodic} if
\[
M_\tau^H=\C.
\]
In the commutative case $A=C(X)$ and $M_\tau=L^\infty(X,\mu)$, this is just the usual ergodicity of the corresponding measure-preserving action. See for example \cite{AHK,HLS} for this terminology in the operator-algebraic setting.

Our main theorem shows that relative property~\textup{(T)} implies Bauerness once ergodicity is assumed. We write $\ext \T(A)^G:=\partial_{\mathrm e}(\T(A)^G)$ for the extreme boundary of the
invariant trace simplex $\T(A)^G$.

\begin{theorem}
\label{thm:intro-main}
 Let $H\leq G$. If the pair $(G,H)$ has relative property~\textup{(T)} and $H$ acts ergodically on $M_\tau$ for all \(\tau\in \ext \T(A)^G\),
then $\T(A)^G$ is a Bauer simplex.
\end{theorem}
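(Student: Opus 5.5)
We show that $\ext \T(A)^G$ is weak$^*$-closed, following the Glasner--Weiss template: take a net (by separability and metrizability, a sequence) $\tau_n\in\ext\T(A)^G$ with $\tau_n\weakto\tau$, and prove $\tau$ is extreme. The plan is to translate extremality into a spectral-gap statement uniform in $n$, and then pass that statement to the limit.

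\textbf{Step 1: extremality equals $G$-ergodicity of the center.} Extremality of an invariant trace $\tau$ in $\T(A)^G$ is equivalent to $Z(M_\tau)^G=\C$. Indeed, $G$-invariant central projections (or positive elements) of $M_\tau$ give decompositions $\tau=\tau(z\,\cdot)+\tau((1-z)\,\cdot)$ into invariant traces, and conversely. In particular the hypothesis $M_\tau^H=\C$ implies $\tau$ is extreme, since $Z(M_\tau)^G\subseteq M_\tau^H$. Under the hypothesis, extremality of the limit will therefore follow once we show $M_\tau^H=\C$, or more weakly $Z(M_\tau)^G=\C$.

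\textbf{Step 2: uniform spectral gap from relative (T).} For each $n$, consider the unitary representation of $G$ on $L^2_0(M_{\tau_n})=L^2(M_{\tau_n})\ominus\C\hat 1$. By hypothesis it has no nonzero $H$-invariant vectors. Relative property (T) for $(G,H)$ yields a Kazhdan pair $(Q,\varepsilon)$, with $Q\subseteq G$ finite, depending only on $(G,H)$. For every unitary $G$-representation $\pi$ and every vector $\xi$,
\[
\norm{\xi-P_H\xi}\le \varepsilon^{-1}\max_{g\in Q}\norm{\pi(g)\xi-\xi},
\]
where $P_H$ is the projection onto the $H$-fixed vectors (the quantitative form of relative (T)). Applied to $L^2_0(M_{\tau_n})$, where $P_H=0$, this gives, for all $a\in A$,
\[
\normtwo{a-\tau_n(a)}{\tau_n}\le \varepsilon^{-1}\max_{g\in Q}\normtwo{\alpha_g(a)-a}{\tau_n}.
\]
The point is that the constant is uniform in $n$. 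This is the noncommutative replacement of the ergodicity-gap argument of Glasner--Weiss.

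\textbf{Step 3: passing to the limit.} Both sides of the inequality are expressed through values of $\tau_n$ on polynomials in $a$, $a^*$, $\alpha_g(a)$. For example,
\[
\normtwo{a-\tau_n(a)}{\tau_n}^2=\tau_n(a^*a)-\abs{\tau_n(a)}^2.
\]
So the inequality passes to the weak$^*$ limit and holds for $\tau$ on $A$. By density of $\pi_\tau(A)$ in $L^2(M_\tau)$ it extends to all of $M_\tau$, via Kaplansky density, approximating $x\in M_\tau$ by bounded nets in $\pi_\tau(A)$ converging in $\|\cdot\|_2$. This approximation also controls $\alpha_g(x)-x$, because $\alpha_g$ is $\|\cdot\|_2$-isometric. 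For $x\in M_\tau^H$ the right-hand side need not vanish, since only $H$-invariance is known. However, for $x\in Z(M_\tau)^G$ the right-hand side is $0$, hence $x=\tau(x)1$. By Step 1, $\tau$ is extreme.

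\textbf{Main obstacle.} The main difficulty is the uniform quantitative inequality in Step 2. The standard relative (T) definition only says that almost-invariant vectors imply nonzero $H$-invariant vectors. The uniform bound $\norm{\xi-P_H\xi}\le\varepsilon^{-1}\max_{g\in Q}\norm{\pi(g)\xi-\xi}$ requires the Jolissaint-type equivalence, which I would invoke, or else derive by applying relative (T) to $(\ker P_H)$ restricted to $H$-fixed-free subrepresentations. The subtlety is that $\ker P_H$ need not be $G$-invariant when $H$ is not normal. I would handle this by working with the $G$-invariant subspace orthogonal to all $H$-fixed vectors generated under $G$, which still contains no $H$-fixed vectors. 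The remaining steps, Step 1 and the limit passage in Step 3, are routine.
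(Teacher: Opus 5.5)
Your argument is correct and is essentially the paper's: you recast extremality as $Z(M_\tau)^G=\C1$, use a Kazhdan pair for $(G,H)$ to get a spectral gap on $L^2(M_{\tau_n})\ominus\C\xi_{\tau_n}$ that is uniform in $n$, and pass this gap to the weak$^*$ limit (the paper does the last step by contradiction in Theorem~\ref{thm:uniform-gap}; you do it directly). The ``main obstacle'' you flag does not arise, because $L^2(M_{\tau_n})\ominus\C\xi_{\tau_n}$ is already $G$-invariant with no nonzero $H$-fixed vectors, so the bare Kazhdan-pair definition suffices as in Proposition~\ref{prop:relative-kazhdan-pair}; what does need an argument, rather than ``by hypothesis'', is that $M_{\tau_n}^H=\C$ forces $L^2(M_{\tau_n})^{H}=\C\xi_{\tau_n}$, which is exactly Lemma~\ref{lem:l2-fixed}.
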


Taking $H=G$ and $A=C(X)$ recovers the theorem of Glasner and Weiss. When $A=C^*(G)$ and $H=G$ acts by conjugation, the scalar fixed-point hypothesis is automatic on extremal traces, and one recovers that for groups with property~\textup{(T)}, the simplex of traces on the group, or equivalently, on its maximal $C^*$-algebra is Bauer, see \cite{LSV}.

One reason the Bauer conclusion is useful is that it gives a compact topological
parametrization of ergodic invariant traces. Indeed, if $K$ is a Bauer simplex,
then $\partial_{\mathrm e}K$ is compact and $K$ is canonically affinely
homeomorphic to $\Prob(\partial_{\mathrm e}K)$. This regularity also fits naturally with existing structure theory. Compact
extreme boundary is the natural topological setting for packaging tracial GNS
completions into continuous bundles of tracial von Neumann algebras in the
sense of Ozawa \cite{OzawaWbundles}; see also \cite{CCEGSTW}. In the dynamical
direction, assumptions on Bauerness are useful in proving certain regularity
results, see for example \cite{GGNV, KKP}.

To make Theorem~\ref{thm:intro-main} useful, we prove some criteria forcing scalar or central fixed points, based on displacement, innerness, and mixing. The guiding principle is that once the action sees enough of the algebra, the
invariant trace space becomes Bauer. We start with the following consequence.

\begin{theorem}[Inner generation criterion]
\label{thm:intro-inner}
Assume that $G$ has property~\textup{(T)}. Suppose that there is a subset
$S\subseteq G$ such that $\alpha_s$ is inner for every $s\in S$, and for some
choice of implementing unitaries $u_s\in U(A)$ one has
\[
A=C^*\bigl(Z(A)\cup \{u_s:s\in S\}\bigr).
\]
Then $\T(A)^G$ is Bauer.
\end{theorem}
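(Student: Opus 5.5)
We derive this from Theorem~\ref{thm:intro-main} with $H=G$; the pair $(G,G)$ has relative property~\textup{(T)} because $G$ has property~\textup{(T)}. It therefore suffices to show that $M_\tau^G=\C$ for every $\tau\in\ext\T(A)^G$. We split this into two steps. First, every $G$-fixed element of $M_\tau$ is central. Second, extremality of $\tau$ among invariant traces forces $Z(M_\tau)^G=\C$.

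For the first step, fix $\tau\in\T(A)^G$ and write $\pi=\pi_\tau$. The extension of $\alpha_s$ to $M_\tau$ is still implemented by $\pi(u_s)$, because the identity $\alpha_s=\Ad u_s$ on $A$ passes to the weak closure: both sides are normal automorphisms of $M_\tau$ that agree on the dense subalgebra $\pi(A)$. Hence if $x\in M_\tau^G$, then $\pi(u_s)x\pi(u_s)^*=x$, so $x$ commutes with $\pi(u_s)$ for every $s\in S$. Separately, $\pi(Z(A))\subseteq Z(M_\tau)$, since $\pi(Z(A))$ commutes with $\pi(A)$ and therefore with its weak closure, so $x$ also commutes with $\pi(Z(A))$. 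The generation hypothesis says $\pi(A)$ is the norm closure of polynomials in $\pi(Z(A))\cup\{\pi(u_s)\}$, so $x\in\pi(A)'\cap M_\tau=Z(M_\tau)$. This gives $M_\tau^G=Z(M_\tau)^G$.

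The second step, showing $Z(M_\tau)^G=\C$ for extremal $\tau$, is the main point. This is the standard fact that a $G$-invariant trace is extreme in $\T(A)^G$ iff $Z(M_\tau)^G=\C$; if it was established earlier in the paper I would cite it, and otherwise prove it directly as follows. Suppose $p\in Z(M_\tau)^G$ is a nontrivial projection; such a projection exists whenever $Z(M_\tau)^G\neq\C$, because $Z(M_\tau)^G$ is an abelian von Neumann algebra. Put $\lambda=\tau(p)\in(0,1)$, where $\tau$ also denotes the vector-state extension to $M_\tau$. Define
\[
\tau_1(a)=\lambda^{-1}\tau(p\,\pi(a)),\qquad \tau_2(a)=(1-\lambda)^{-1}\tau((1-p)\pi(a)).
\]
These are traces because $p$ is central. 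They are $G$-invariant because $p$ is $G$-fixed and the extended action preserves $\tau$. We have $\tau=\lambda\tau_1+(1-\lambda)\tau_2$. To see $\tau_1\neq\tau$, note that otherwise $\tau((p-\lambda)\pi(a))=0$ for all $a\in A$, and by normality and density this extends to all of $M_\tau$. Since $\tau$ is faithful on $M_\tau$, this forces $p=\lambda$, which is impossible for a projection with $0<\lambda<1$. So $\tau$ is not extreme, a contradiction.

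Combining the two steps, every $\tau\in\ext\T(A)^G$ satisfies $M_\tau^G=\C$, and Theorem~\ref{thm:intro-main} then gives that $\T(A)^G$ is Bauer. The only subtle point is in the first step: implementing unitaries are a priori defined only on $A$, and one has to check that $\Ad\pi(u_s)$ implements the extended action, which is the normality argument above.
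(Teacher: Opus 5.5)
Your proof is correct and is essentially the paper's argument. The paper applies Proposition~\ref{prop:inner-core} with $H=\langle S\rangle$ (a $G$-fixed element commutes with the $\pi_\tau(u_s)$ and with $\pi_\tau(Z(A))$, hence is central), then uses Corollary~\ref{cor:trace-extreme-criterion} to get $Z(M_\tau)^G=\C$ and Corollary~\ref{cor:main-trace} to conclude, exactly as you do, except that you work with $S$ directly rather than the generated subgroup and you reprove the needed direction of the extremality criterion via a central $G$-fixed projection instead of citing the paper's Radon--Nikod\'ym description of faces.
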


This generalises \cite[Thm.~1.9]{LSV}, and shows, for example, that
\(\T(C^*(F_n))^{\Aut(F_n)}\) is Bauer for every \(n\geq 4\), using property
\textup{(T)} of \(\Aut(F_n)\) due to
\cite{NitscheAutF4T,KNOAutF5T,KKNAutFnT}.

We also get concrete new applications from quasi-local permutation actions. In particular, the Bernoulli case gives an interesting example:

\begin{corollary}[Bernoulli embedding]
Let $G$ be an infinite countable discrete group with property ~\textup{(T)} , let $D$ be a separable unital $C^*$-algebra with $\T(D)\neq\varnothing$, and let
\[
A=\bigotimes_G D
\]
with the Bernoulli shift action of $G$. Then $\T(A)^G$ is Bauer, and the map
\[
\T(D)\longrightarrow \ext \T(A)^G,
\qquad
\rho\longmapsto \bigotimes_G \rho,
\]
is a continuous injection.
\end{corollary}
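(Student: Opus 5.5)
The plan is to apply Theorem~\ref{thm:intro-main} with $H=G$. This requires showing that for every $\tau\in\ext\T(A)^G$ the Bernoulli action of $G$ on $M_\tau$ is ergodic. Separately, I will check that $\rho\mapsto\bigotimes_G\rho$ lands in $\ext\T(A)^G$ and is a continuous injection.

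\textbf{Ergodicity.} Fix $\tau\in\ext\T(A)^G$ and $x\in M_\tau^G$; we want $x\in\C$.

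First, extremality makes $\tau$ an extreme point of the invariant tracial states on $M_\tau$. It follows that $M_\tau^G\cap Z(M_\tau)=\C$: a nonscalar invariant central projection $p$ would split $\tau$ as a convex combination of the distinct invariant traces $\tau(p\,\cdot)/\tau(p)$ and $\tau((1-p)\,\cdot)/\tau(1-p)$.

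Next I show $M_\tau^G\subseteq Z(M_\tau)$, using a displacement/asymptotic commutation argument.
\begin{itemize}
\item Take $a\in A$ supported on a finite set $F\subseteq G$, and $\epsilon>0$. Choose $b\in A$ supported on a finite set $E$ with $\|x-\pi_\tau(b)\|_{2,\tau}<\epsilon$ and $\|b\|\le\|x\|$ (Kaplansky density).
\item Since $G$ is infinite, pick $g$ with $gE\cap F=\varnothing$. Then $\alpha_g(b)$ commutes with $a$ in $A$.
\item Invariance of $x$ and $\tau$ gives $\|x-\pi_\tau(\alpha_g(b))\|_{2,\tau}<\epsilon$.
\item Hence $\|[x,\pi_\tau(a)]\|_{2,\tau}\le 2\epsilon\|a\|$.
\end{itemize}
Letting $\epsilon\to0$ shows $x$ commutes with the local elements. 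These are norm dense in $A$, so $x$ commutes with $\pi_\tau(A)''=M_\tau$. Combined with the previous step, $x$ is scalar, and Theorem~\ref{thm:intro-main} gives that $\T(A)^G$ is Bauer.

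\textbf{The map.} The product trace $\bigotimes_G\rho$ is $G$-invariant. For extremality I will show it is mixing. On elementary tensors $a$, $b$ with finite supports, $\tau(\alpha_g(a)b)=\tau(a)\tau(b)$ once $gF_a\cap F_b=\varnothing$, and this holds outside a finite set of $g$. By density, $\tau(\alpha_g(a)b)\to\tau(a)\tau(b)$ as $g\to\infty$ for all $a,b$, so the action on $L^2$ is weakly mixing and hence $M_\tau^G=\C$.

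Mixing alone does not give extremality in $\T(A)^G$. The standard fix is as follows: if $\tau=t\tau_1+(1-t)\tau_2$ with $\tau_i$ invariant, then $\tau_1\le t^{-1}\tau$. Such a $\tau_1$ is given by a positive invariant element of the center of $M_\tau$, which is scalar by the ergodicity just shown, so $\tau_1=\tau$.

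Continuity is immediate. On a local elementary tensor $\bigotimes_{h\in F}d_h$ the product trace evaluates to $\prod_{h\in F}\rho(d_h)$, which is weak$^*$-continuous in $\rho$, and local elements are dense with traces uniformly bounded. Injectivity follows by restricting to a single tensor factor, which recovers $\rho$.

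\textbf{Main obstacle.} The delicate step is the approximation argument showing $M_\tau^G\subseteq Z(M_\tau)$. It needs care with $2$-norm estimates for products, namely $\|[y,\pi_\tau(a)]\|_2\le2\|a\|\,\|y\|_2$. These hold because $\tau$ is a trace, so left and right multiplication are both bounded in $\|\cdot\|_2$.
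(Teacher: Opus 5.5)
Your proof is correct and follows essentially the same route as the paper. The paper argues that displacement gives $M_\tau^G\subseteq Z(M_\tau)$, extremality gives $Z(M_\tau)^G=\C$, mixing combined with the Radon--Nikod\'ym description of faces makes the product traces extremal, and continuity and injectivity are checked on local elementary tensors; your only superfluous step is the Kaplansky bound $\|b\|\le\|x\|$, which is not justified in $A$ (Kaplansky only controls $\|\pi_\tau(b)\|$) but is also not needed, since the commutator estimate uses only $\|a\|$.
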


Thus, even when $\T(A)^G$ is Bauer, its extreme boundary can be quite large. For example, taking $D=C^*(F_2)$ shows that $\ext \T(A)^G$ contains a compact space homeomorphic to the Poulsen simplex \cite{OSV}.

Finally, we obtain concrete crossed-product consequences. 

\begin{corollary}[Permutational wreath products]
Let $I$ be a countable $G$-set and let $\Delta$ be a countable discrete group. Assume that $G$ has property~\textup{(T)}, that $\Rad(G)=\{e\}$, and that the action of $G$ on $I$ displaces finite subsets. Then
\[
C_r^*\bigl((\bigoplus_I \Delta)\rtimes G\bigr)
\]
has Bauer trace simplex. In particular, if $G$ is infinite and $I=G$ with the left translation action, then
\[
C_r^*(\Delta\wr G)
\]
has Bauer trace simplex.
\end{corollary}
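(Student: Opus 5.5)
The plan is to pass through the crossed-product picture and reduce to Theorem~\ref{thm:intro-main}. Write $B=C_0$-free notation $B=C^*_r(\bigoplus_I\Delta)$, so that $C_r^*((\bigoplus_I\Delta)\rtimes G)\cong B\rtimes_r G$, where $G$ acts on $B$ by permuting coordinates. The first step is to show that every trace $\tau$ on $B\rtimes_r G$ factors through the canonical conditional expectation, i.e.\ $\tau=\tau|_B\circ E$. For this I would use $\Rad(G)=\{e\}$. By the Breuillard--Kalantar--Kennedy--Ozawa theorem, $G$ is $C^*$-simple/has the unique trace property, and the Kalantar--Kennedy boundary argument extends to reduced crossed products. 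Concretely, one shows that $\tau(b\lambda_g)=0$ for $g\neq e$ by passing to a $G$-boundary and using that no nontrivial element fixes a point in the Furstenberg boundary amenably. The result I would quote is Bryder--Kennedy: for $G$ with trivial amenable radical, every trace on $B\rtimes_r G$ is concentrated on $B$. This gives an affine homeomorphism $\T(B\rtimes_r G)\cong \T(B)^G$, so it suffices to show that $\T(B)^G$ is Bauer.

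Next, note that $B=\bigotimes_I C^*_r(\Delta)$ (minimal tensor product) carries the quasi-local permutation action. I would apply Theorem~\ref{thm:intro-main} with $H=G$, which is legitimate since $G$ has property~(T). It remains to check ergodicity: for $\tau\in\ext\T(B)^G$ we need $M_\tau^G=\C$. Extremality in $\T(B)^G$ gives $Z(M_\tau)^G=\C$. So the plan is to show that $G$-fixed points in $M_\tau$ are central, and hence lie in $Z(M_\tau)^G=\C$. For this I would use the displacement hypothesis. Let $x\in M_\tau^G$ and $y=\pi_\tau(b)$ with $b$ supported on a finite $F\subseteq I$. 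Since the action displaces finite subsets, we may choose $g_n$ with $g_nF\cap F=\emptyset$, and more strongly with $g_n F$ eventually disjoint from any fixed finite set. Approximate $x$ in $\|\cdot\|_{2,\tau}$ by $x_0$ supported on a finite $F'$. Then
\[
\|[x,y]\|_2=\|[x,\alpha_{g_n^{-1}}(y)]\|_2\le 2\|x-x_0\|_2\|y\|+\|[x_0,\alpha_{g_n^{-1}}(y)]\|_2,
\]
and the last term vanishes once $g_n^{-1}F\cap F'=\emptyset$, since tensor factors on disjoint sets commute. Therefore $x$ commutes with a dense subalgebra, so $x\in Z(M_\tau)$. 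Presumably this is exactly the displacement criterion established earlier in the paper, in which case I would simply invoke it.

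The main obstacle is the first step, namely obtaining that traces vanish off $B$ using only $\Rad(G)=\{e\}$ and no freeness assumption on the action on $B$. I would first try to cite the known theorem (Bryder--Kennedy, or Breuillard--Kalantar--Kennedy--Ozawa for general crossed products). Failing that, I would prove it by noting that $\tau$ restricted to $C^*_r(G)$ composed with suitable twisting yields a $G$-invariant state whose support group lies in the amenable radical. For the final statement, take $I=G$ with left translation: when $G$ is infinite, the condition $gF\cap F=\emptyset$ fails only for the finitely many $g\in FF^{-1}$, so displacement holds.
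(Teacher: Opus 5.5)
Your proposal is correct and follows essentially the same route as the paper. Bryder--Kennedy (via $\Rad(G)=\{e\}$) identifies $\T\bigl(C_r^*(\bigoplus_I\Delta)\rtimes_r G\bigr)$ with $\T\bigl(C_r^*(\bigoplus_I\Delta)\bigr)^G$, and the displacement argument gives $M_\tau^G\subseteq Z(M_\tau)$, which together with $Z(M_\tau)^G=\C$ for extremal $\tau$ and property~(T) yields Bauerness, exactly as in Proposition~\ref{prop:displacement-central} and Corollary~\ref{cor:direct-sum-group}. One minor correction that does not affect the argument: trivial amenable radical is equivalent to the unique trace property but not to $C^*$-simplicity (Le Boudec's examples); this is harmless because you only use Bryder--Kennedy, which needs just the unique trace property.
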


The crossed-product applications use external trace-extension theorems.
Ursu provides a very general framework for traces on crossed products \cite{Ursu},
and in the reduced case with trivial amenable radical,
Bryder--Kennedy identifies $\T(A\rtimes_{\alpha,r}G)$ with $\T(A)^G$
\cite{BryderKennedy}. We combine these correspondences with the Bauer results
proved above to obtain new examples.

We remark generally that the hypotheses are not meant to characterise when $\T(A)^G$ is Bauer: even
nontrivial property~\textup{(T)} actions on algebras of the form $B\oplus B$
can have Bauer invariant trace space while the fixed-point algebra on an
extremal invariant trace is far from scalar.

A related line of work considers invariant states. Since the space of states is usually not a simplex, this was
developed in deep work by Kennedy and Shamovich, introducing the theory of non-commutative Choquet simplices. This allows them to show that for an action of a discrete property~\textup{(T)} group on a unital $C^*$-algebra, the invariant
nc state space is an nc Bauer simplex, equivalently affinely homeomorphic to
the nc state space of a unital $C^*$-algebra \cite[\S14]{KS}; see also
\cite{KKM} and the survey \cite{davidson2024noncommutative}. Our focus here is the ordinary Choquet simplex $\T(A)^G$ of invariant tracial states.

The proof of Theorem~\ref{thm:intro-main} has two ingredients. First, we identify the face generated by an invariant trace $\tau$ with the normalized positive cone of $Z(M_\tau)^G$. Second, once extremality is recast in von Neumann algebraic terms, the Bauer problem becomes a uniform spectral-gap problem. Relative property~\textup{(T)} supplies the required gap precisely when subgroup fixed points are scalar.

The paper is organized as follows. Section~\ref{sec:preliminaries} studies the simplex $\T(A)^G$ and proves the fixed-center description of faces and extremal traces. Section~\ref{sec:main} establishes the uniform spectral-gap theorem and the relative property~\textup{(T)} criterion above. Section~\ref{sec:criteria} develops the displacement, innerness, and mixing criteria for scalar or central fixed points. Section~\ref{sec:applications} applies these criteria to quasi-local permutation actions, generalized Bernoulli actions, invariant traces on maximal and reduced group $C^*$-algebras, and reduced crossed products arising from wreath products and semidirect products.

\subsection*{Acknowledgements}
We thank Itamar Vigdorovich for his very helpful comments on a draft of this paper.

\section{Preliminaries}
\label{sec:preliminaries}

Throughout the paper, $A$ denotes a separable unital $C^*$-algebra, $G$ a countable discrete group, and $\alpha\colon G\to \Aut(A)$ an action.
For $\tau\in \T(A)$ we write
\[
(\pi_\tau,H_\tau,\xi_\tau)
\]
for the tracial GNS triple and $M_\tau=\pi_\tau(A)''$ for the associated finite von Neumann algebra.
The trace on $A$ extends to a faithful normal tracial state on $M_\tau$, still denoted by $\tau$.
We equip $M_\tau$ with the $2$-norm
\[
\normtwo{x}{\tau}:=\tau(x^*x)^{1/2}.
\]
Via the identification $H_\tau=L^2(M_\tau,\tau)$ we also use the right multiplication operators
\[
R_x(y\xi_\tau):=yx\xi_\tau
\qquad (x,y\in M_\tau),
\]
so that $R(M_\tau)=M_\tau'$.

If $\tau\in \T(A)^G$, the action of $G$ on $A$ extends and is unitarily implemented on $H_\tau$ by
\[
U^\tau_g\pi_\tau(a)\xi_\tau=\pi_\tau(\alpha_g(a))\xi_\tau,
\qquad a\in A,\ g\in G.
\]
The same symbol $\alpha$ will be used for the induced trace-preserving action on $M_\tau$. See \cite[Theorem~2.3.16 and Corollary~2.3.17]{BR1} for details.

\subsection{The simplex \texorpdfstring{$\T(A)^G$}{invariant trace simplex}}

It is classical that $\T(A)$ is a metrizable Choquet simplex; see for example \cite{BR}.
We record that the invariant trace space is again a metrizable Choquet simplex.

\begin{proposition}
\label{prop:fixed-simplex}
If $\T(A)^G\neq\varnothing$, then $\T(A)^G$ is a metrizable Choquet simplex.
\end{proposition}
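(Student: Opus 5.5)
The plan is to show that $\T(A)^G$ is a compact convex metrizable set and then check the simplex property through the lattice-cone criterion, reducing it to the fact that $\T(A)$ is a Choquet simplex.

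\emph{Compactness and metrizability.} Since $A$ is separable, the state space $S(A)$ is weak$^*$ compact and metrizable. The set $\T(A)$ is the weak$^*$-closed convex subset cut out by the conditions $\tau(ab)=\tau(ba)$. The set $\T(A)^G$ is then cut out inside $\T(A)$ by the closed affine conditions $\tau\circ\alpha_g=\tau$ for $g\in G$. So $\T(A)^G$ is a nonempty, compact, convex, metrizable subset of the locally convex space $A^*$ with the weak$^*$ topology.

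\emph{Setting up the lattice criterion.} A compact convex set $K$ in a locally convex space is a Choquet simplex exactly when the cone $\mathbb R_{\ge0}K$ is a lattice in its own ordering, where $x\le y$ means $y-x$ lies in the cone. Let $C=\mathbb R_{\ge0}\T(A)$ be the cone of positive tracial functionals. This cone is a lattice because $\T(A)$ is a Choquet simplex (the classical fact quoted above). The $G$-action $\phi\mapsto\phi\circ\alpha_g$ preserves $C$ and is an order-automorphism, hence a lattice automorphism. The invariant cone $C^G$ equals $\mathbb R_{\ge0}\T(A)^G$.

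\emph{Checking that $C^G$ is a lattice.} For $\phi,\psi\in C^G$, take $\phi\wedge\psi$ computed in $C$. Invariance of the lattice operations gives
\[
(\phi\wedge\psi)\circ\alpha_g=(\phi\circ\alpha_g)\wedge(\psi\circ\alpha_g)=\phi\wedge\psi ,
\]
so $\phi\wedge\psi\in C^G$. Since $C^G\subseteq C$ carries the induced order, any lower bound of $\phi,\psi$ in $C^G$ is also a lower bound in $C$, so it lies below $\phi\wedge\psi$. The difference lies in $C$ and is invariant, hence lies in $C^G$. Therefore $\phi\wedge\psi$ is the infimum in $C^G$. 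The join is handled the same way, or via $\phi\vee\psi=\phi+\psi-\phi\wedge\psi$. So $C^G$ is a lattice and $\T(A)^G$ is a Choquet simplex.

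\emph{Main obstacle.} The only delicate point is that the order on $C^G$ is the one inherited from $C$. This holds because $x\le y$ in $C^G$ means $y-x\in C^G$, and $y-x$ is automatically invariant whenever $x$ and $y$ are.

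An alternative route is to use barycenters: average the unique maximal representing measure over $G$, using that uniqueness forces it to be $G$-invariant, and identify the invariant measures on $\ext\T(A)$ with representing measures for $\T(A)^G$. The lattice argument avoids all measure theory, so I would use it.
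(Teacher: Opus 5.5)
Your proof is correct, and it takes a different route from the paper's.

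\textbf{The paper's route.} It works on the measure side. It takes the unique representing measure $\mu_\tau$ on $X=\ext\T(A)$ and observes, by uniqueness, that $\mu_{g\cdot\tau}=g_*\mu_\tau$. This identifies $\T(A)^G$ with the $G$-invariant probability measures on the standard Borel $G$-space $X$. It then invokes the ergodic decomposition theorem for countable group actions (Varadarajan) to conclude that this set is a simplex.

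\textbf{Your route.} You stay on the cone side. The maps $\phi\mapsto\phi\circ\alpha_g$ are order-automorphisms of the lattice cone $C=\mathbb{R}_{\ge 0}\T(A)$, so they preserve meets and joins. Hence $C^G=\mathbb{R}_{\ge0}\T(A)^G$ is a sublattice, and its own order coincides with the one induced from $C$. The Choquet--Meyer lattice characterization then finishes the proof. Your check that joins can be handled exactly like meets is fine.

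\textbf{What your approach buys.}
\begin{itemize}
\item It is shorter and purely order-theoretic. It needs neither uniqueness of boundary measures, nor standard Borel structure, nor the ergodic decomposition theorem.
\item It never uses countability of $G$. Separability of $A$ enters only for metrizability.
\item It proves more: the fixed-point set of any group of affine homeomorphisms of a Choquet simplex is a (possibly empty) Choquet simplex.
\item It avoids a step the paper leaves implicit. There one must match the Borel-theoretic uniqueness in the ergodic decomposition on $X$ with uniqueness of maximal representing measures on the compact convex set $\T(A)^G$.
\end{itemize}

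\textbf{What the paper's approach buys.} It gives a concrete model: $\T(A)^G$ is the set of invariant measures on extremal traces, and $\ext\T(A)^G$ corresponds to the ergodic ones. This is the natural bridge to the Glasner--Weiss setting.

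The alternative you sketch at the end is essentially the paper's argument. No averaging over $G$ is needed there, and none would be available for non-amenable $G$: uniqueness of the representing measure alone forces its invariance.
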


\begin{proof}
Let $X=\ext\T(A)$, equipped with its Borel structure.
Since $\T(A)$ is a metrizable Choquet simplex, every $\tau\in \T(A)$ admits a unique representing probability measure $\mu_\tau$ on $X$.The action of $G$ on $\T(A)$ by
\[
g\cdot \tau:=\tau\circ \alpha_{g^{-1}}
\]
preserves $X$.
If $\tau\in \T(A)$ and $g\in G$, then $g_*\mu_\tau$ is a representing measure for $g\cdot \tau$, so uniqueness gives
\[
\mu_{g\cdot \tau}=g_*\mu_\tau.
\]
Hence $\tau\in \T(A)^G$ if and only if $\mu_\tau$ is $G$-invariant.
Therefore the barycenter map identifies $\T(A)^G$ affinely with the set of $G$-invariant Borel probability measures on the standard Borel $G$-space $X$.
By the ergodic decomposition theorem for countable group actions on standard Borel spaces \cite{Varadarajan}, the latter is a Choquet simplex.
Metrizability follows from weak-$^*$ compactness and the separability of $A$.
\end{proof}

\subsection{Faces and extremality}

The next proposition identifies the smallest face of $\T(A)^G$ containing a given invariant trace, which we will call the face generated by it.

\begin{proposition}
\label{prop:trace-face}
Let $\tau\in \T(A)^G$.
Then the face of $\T(A)^G$ generated by $\tau$ is
\[
\mathcal F(\tau)
=
\set{\sigma\in \T(A)^G:\ \sigma\leq c\tau\ \text{for some }c>0}.
\]
Moreover, for $\sigma\in \T(A)^G$ the following are equivalent:
\begin{enumerate}[label=\textup{(\roman*)}]
\item $\sigma\in \mathcal F(\tau)$;
\item there exists $h\in Z(M_\tau)^G_+$ with $\tau(h)=1$ such that
\[
\sigma(a)=\tau\bigl(h\pi_\tau(a)\bigr)
\qquad (a\in A).
\]
\end{enumerate}
In particular, $\mathcal F(\tau)$ is affinely isomorphic to the convex set
\[
\set{h\in Z(M_\tau)^G_+:\ \tau(h)=1}.
\]
\end{proposition}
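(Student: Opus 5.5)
The plan has three parts: first identify the generated face as the set of invariant traces dominated by a multiple of $\tau$, then prove the equivalence (i)$\Leftrightarrow$(ii) by a noncommutative Radon--Nikodym argument, and finally read off the affine isomorphism.

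\textbf{The generated face.} Put $\mathcal F'=\set{\sigma\in \T(A)^G:\sigma\le c\tau \text{ for some } c>0}$.

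First, $\mathcal F'$ is convex and contains $\tau$.

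Second, $\mathcal F'$ is a face. Suppose $\sigma=t\sigma_1+(1-t)\sigma_2$ with $\sigma_i\in\T(A)^G$, $0<t<1$, and $\sigma\le c\tau$. Then $\sigma_1\le t^{-1}\sigma\le (c/t)\tau$, and similarly for $\sigma_2$.

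Third, $\mathcal F'$ is contained in any face $F$ containing $\tau$. Let $\sigma\le c\tau$ with $c>1$ (if $\sigma=\tau$ there is nothing to prove). Set $\sigma'=(c\tau-\sigma)/(c-1)$. This is a positive functional with $\sigma'(1)=1$, and it is tracial and $G$-invariant, so $\sigma'\in\T(A)^G$. Then
\[
\tau=\tfrac1c\sigma+\tfrac{c-1}{c}\sigma',
\]
and since $F$ is a face containing $\tau$, we get $\sigma\in F$. Hence $\mathcal F(\tau)=\mathcal F'$.

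\textbf{(i)$\Rightarrow$(ii).} Let $\sigma\le c\tau$. Define a sesquilinear form on the dense subspace $\pi_\tau(A)\xi_\tau\subseteq H_\tau$ by
\[
(\pi_\tau(a)\xi_\tau,\pi_\tau(b)\xi_\tau)\mapsto\sigma(b^*a).
\]
It is well defined and bounded because $|\sigma(b^*a)|^2\le\sigma(a^*a)\sigma(b^*b)\le c^2\normtwo{a}{\tau}^2\normtwo{b}{\tau}^2$. So there is a unique $T\in B(H_\tau)$ with $0\le T\le c$ and $\la T\pi_\tau(a)\xi_\tau,\pi_\tau(b)\xi_\tau\ra=\sigma(b^*a)$.

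Using the form, $T$ commutes with left multiplication, i.e.\ with $\pi_\tau(A)$, so $T\in M_\tau'=R(M_\tau)$. Traciality of $\sigma$ gives $\sigma(b^*a)=\sigma(ab^*)$, which is the analogous statement with the roles of left and right multiplication exchanged. To exploit it, transport $T$ through the conjugation $J\pi_\tau(a)\xi_\tau=\pi_\tau(a^*)\xi_\tau$. A direct computation shows $JTJ=T$, so $T\in M_\tau\cap M_\tau'=Z(M_\tau)$.

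Write $T=h$ with $h\in Z(M_\tau)_+$. Then $\sigma(a)=\la h\pi_\tau(a)\xi_\tau,\xi_\tau\ra=\tau(h\pi_\tau(a))$, and $\tau(h)=\sigma(1)=1$.

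$G$-invariance: invariance of $\sigma$ and $\tau$ gives $(U^\tau_g)^*hU^\tau_g=h$. Indeed both sides induce the same form, because $U^\tau_g$ maps $\pi_\tau(a)\xi_\tau$ to $\pi_\tau(\alpha_g(a))\xi_\tau$. Since $\alpha_g$ on $M_\tau$ is implemented by $\Ad U^\tau_g$, this says $h\in Z(M_\tau)^G$.

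\textbf{(ii)$\Rightarrow$(i).} Let $h$ be as in (ii). Then
\[
\sigma(a^*a)=\tau\bigl(h^{1/2}\pi_\tau(a^*a)h^{1/2}\bigr)\ge0,
\]
using centrality of $h$, so $\sigma$ is positive. Traciality follows from centrality of $h$ and traciality of $\tau$. Invariance follows from $\alpha_g(h)=h$ and invariance of $\tau$ on $M_\tau$. Finally $\sigma\le\norm{h}\tau$, because $\norm{h}-h\ge0$ is central.

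\textbf{Affine isomorphism and main obstacle.} The map $h\mapsto\tau(h\pi_\tau(\cdot))$ is affine and, by the above, surjective onto $\mathcal F(\tau)$. It is injective because $\pi_\tau(A)$ is $2$-norm dense in $M_\tau$ and $\tau$ is faithful and normal on $M_\tau$: if $\tau(h\pi_\tau(a))=\tau(h'\pi_\tau(a))$ for all $a$, then $\tau((h-h')x)=0$ for all $x\in M_\tau$, forcing $h=h'$.

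The main obstacle is the step showing that $T$ lies in $M_\tau$ and not just in $M_\tau'$; this is where traciality of $\sigma$ enters. The cleanest route is the $JTJ=T$ computation, verifying
\[
\la JTJ\,\pi_\tau(a)\xi_\tau,\pi_\tau(b)\xi_\tau\ra=\sigma(b^*a)
\]
directly from $\sigma(xy)=\sigma(yx)$.
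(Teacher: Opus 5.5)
Your proposal is correct and follows essentially the same route as the paper: you identify the generated face with the dominated invariant traces via the decomposition $\tau=\frac1c\sigma+\bigl(1-\frac1c\bigr)\eta$, then take the Radon--Nikod\'ym derivative in $\pi_\tau(A)'$ and use traciality and invariance of $\sigma$ to force it into $Z(M_\tau)^G$. The differences are cosmetic: you build $T$ by hand from the bounded sesquilinear form instead of citing Sakai, and you check centrality via $JTJ=T$ (note $R_k=Jk^*J$) instead of the paper's identity $\tau\bigl((k\pi_\tau(a)-\pi_\tau(a)k)\pi_\tau(b)\bigr)=0$ combined with normality and faithfulness, which expresses the same fact.
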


\begin{proof}
Set
\[
F_0:=\set{\sigma\in \T(A)^G:\ \sigma\leq c\tau\ \text{for some }c>0}.
\]
We first show that $F_0$ is the face generated by $\tau$.
It is immediate that $\tau\in F_0$.
If
\[
\sigma=t\sigma_1+(1-t)\sigma_2\in F_0
\qquad (0<t<1)
\]
and $\sigma\leq c\tau$, then
\[
\sigma_1\leq \frac{c}{t}\tau,
\qquad
\sigma_2\leq \frac{c}{1-t}\tau,
\]
so $\sigma_1,\sigma_2\in F_0$.
Thus $F_0$ is a face.

Now let $F$ be any face of $\T(A)^G$ containing $\tau$, and let $\sigma\in F_0$.
Choose $c>1$ with $\sigma\leq c\tau$ and define
\[
\eta:=\frac{c\tau-\sigma}{c-1}\in \T(A)^G.
\]
Then
\[
\tau=\frac1c\,\sigma+\Bigl(1-\frac1c\Bigr)\eta.
\]
Since $F$ is a face and $\tau\in F$, it follows that $\sigma\in F$.
Therefore $F_0$ is the smallest face containing $\tau$.

Assume now that $\sigma\in \mathcal F(\tau)$.
Choose $c>0$ with $\sigma\leq c\tau$ and set $\rho:=c^{-1}\sigma$.
Then $0\leq \rho\leq \tau$.
By the Radon--Nikod\'ym theorem for positive functionals dominated by a state in the GNS representation \cite[Chapter~1, \S24]{Sakai}, there exists a positive contraction $T\in \pi_\tau(A)'$ such that
\[
\rho(a)=\la T\pi_\tau(a)\xi_\tau,\xi_\tau\ra
\qquad (a\in A).
\]
Since $H_\tau=L^2(M_\tau,\tau)$ is tracial, $T=R_k$ for a unique positive contraction $k\in M_\tau$.
Hence
\[
\rho(a)=\tau\bigl(k\pi_\tau(a)\bigr)
\qquad (a\in A).
\]

We claim that $k\in Z(M_\tau)^G$.
Fix $a,b\in A$.
Since $\rho$ is a trace on $A$ and $\tau$ is a trace on $M_\tau$,
\begin{align*}
\tau\bigl((k\pi_\tau(a)-\pi_\tau(a)k)\pi_\tau(b)\bigr)
&=\tau(k\pi_\tau(a)\pi_\tau(b))-\tau(\pi_\tau(a)k\pi_\tau(b))\\
&=\rho(ab)-\tau(k\pi_\tau(b)\pi_\tau(a))\\
&=\rho(ab)-\rho(ba)=0.
\end{align*}
For fixed $a$, the map
\[
x\longmapsto \tau\bigl((k\pi_\tau(a)-\pi_\tau(a)k)x\bigr)
\]
is a normal functional on $M_\tau$ vanishing on the $\sigma$-weakly dense $*$-subalgebra $\pi_\tau(A)$.
Hence it vanishes on all of $M_\tau$.
Taking $x=(k\pi_\tau(a)-\pi_\tau(a)k)^*$ and using faithfulness of $\tau$, we get
\[
k\pi_\tau(a)=\pi_\tau(a)k
\qquad (a\in A).
\]
Thus $k\in Z(M_\tau)$.

Now let $g\in G$ and $a\in A$.
Using the $G$-invariance of $\tau$ and $\rho$, we obtain
\begin{align*}
\tau\bigl((\alpha_g(k)-k)\pi_\tau(a)\bigr)
&=\tau\bigl(\alpha_g(k)\pi_\tau(a)\bigr)-\tau\bigl(k\pi_\tau(a)\bigr)\\
&=\tau\bigl(\alpha_g(k\pi_\tau(\alpha_{g^{-1}}(a)))\bigr)-\rho(a)\\
&=\tau\bigl(k\pi_\tau(\alpha_{g^{-1}}(a))\bigr)-\rho(a)\\
&=\rho(\alpha_{g^{-1}}(a))-\rho(a)=0.
\end{align*}
As before, normality and $\sigma$-weak density imply $\alpha_g(k)=k$.
Thus $k\in Z(M_\tau)^G$.

Set $h:=ck$.
Then $h\in Z(M_\tau)^G_+$, $\tau(h)=\sigma(1)=1$, and
\[
\sigma(a)=\tau\bigl(h\pi_\tau(a)\bigr)
\qquad (a\in A).
\]
This proves \textup{(i)}$\Rightarrow$\textup{(ii)}.

Conversely, if $h\in Z(M_\tau)^G_+$ with $\tau(h)=1$, then
\[
\sigma_h(a):=\tau\bigl(h\pi_\tau(a)\bigr)
\]
defines a $G$-invariant trace on $A$.
Since $0\leq h\leq \norm{h}1$, we have $\sigma_h\leq \norm{h}\tau$, so $\sigma_h\in \mathcal F(\tau)$.
This proves \textup{(ii)}$\Rightarrow$\textup{(i)}.
The final statement is immediate from the explicit parametrization.
\end{proof}

\begin{corollary}
\label{cor:trace-extreme-criterion}
Let $\tau\in \T(A)^G$.
Then
\[
\tau\in \ext \T(A)^G
\quad\Longleftrightarrow\quad
Z(M_\tau)^G=\C1.
\]
\end{corollary}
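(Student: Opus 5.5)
The plan is to derive the corollary directly from Proposition~\ref{prop:trace-face}. We use the standard fact that a point $\tau$ of a convex set is extreme exactly when the face it generates is the singleton $\{\tau\}$. Indeed, if $\tau=t\sigma_1+(1-t)\sigma_2$ with $0<t<1$, then $\sigma_1\le t^{-1}\tau$, so $\sigma_1\in\mathcal F(\tau)$, and likewise $\sigma_2\in\mathcal F(\tau)$. Conversely, any $\sigma\in\mathcal F(\tau)$ with $\sigma\ne\tau$ produces a nontrivial decomposition $\tau=\frac1c\sigma+(1-\frac1c)\eta$, as in the proof of Proposition~\ref{prop:trace-face}. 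Hence $\tau\in\ext\T(A)^G$ if and only if $\mathcal F(\tau)=\{\tau\}$. By the affine parametrization of the proposition, this holds if and only if the set $\set{h\in Z(M_\tau)^G_+:\tau(h)=1}$ is the single point $\{1\}$.

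For the direction ($\Leftarrow$), assume $Z(M_\tau)^G=\C1$. Any positive $h$ in it with $\tau(h)=1$ must equal $1$, so the face is a singleton and $\tau$ is extreme.

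For the direction ($\Rightarrow$), assume the only normalized positive element of $Z(M_\tau)^G$ is $1$. The set $Z(M_\tau)^G$ is a von Neumann subalgebra, being the intersection of the center with the fixed points of $G$, and $G$ acts by $*$-automorphisms preserving the center. Take any self-adjoint $x\in Z(M_\tau)^G$. For large $r>0$ the element $x+r1$ is positive and invertible with $\tau(x+r1)>0$, so the normalized element $(x+r1)/\tau(x+r1)$ lies in our set and therefore equals $1$. This forces $x\in\C1$. Since every element of $Z(M_\tau)^G$ is a combination of self-adjoint elements of the algebra, it follows that $Z(M_\tau)^G=\C1$.

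No step is a genuine obstacle. The only points needing care are that $Z(M_\tau)^G$ is $*$-closed, so we may reduce to self-adjoint elements, and the faithfulness of $\tau$, which ensures $\tau(x+r1)>0$.
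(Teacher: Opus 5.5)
Your proof is correct and follows the same route as the paper. You reduce extremality to $\mathcal F(\tau)=\{\tau\}$, use the parametrization of Proposition~\ref{prop:trace-face} to turn this into the statement that $1$ is the only normalized positive element of $Z(M_\tau)^G$, and then conclude $Z(M_\tau)^G=\C1$; you simply make explicit the face criterion and the $x+r1$ normalization step that the paper's three-line proof leaves implicit (and $\tau(x+r1)>0$ already follows from $x+r1\geq (r-\norm{x})1$, so faithfulness is not needed there).
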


\begin{proof}
By Proposition~\ref{prop:trace-face}, $\tau$ is extreme if and only if the face $\mathcal F(\tau)$ is reduced to $\{\tau\}$.
This happens exactly when the only element $h\in Z(M_\tau)^G_+$ with $\tau(h)=1$ is $h=1$.
Equivalently, $Z(M_\tau)^G=\C1$.
\end{proof}

\begin{remark}
\label{rem:classical-antecedents}
Compare to Robinson--Ruelle and St{\o}rmer \cite{RobinsonRuelle,Stormer} in the invariant-states setting. 
\end{remark}

\begin{remark}
\label{rem:closed-face-warning}
Proposition~\ref{prop:trace-face} identifies the \emph{algebraic} face generated by $\tau$, not its weak-$^*$ closure.
Already for $A=C(X)$ and a nonatomic measure $\mu\in \Prob(X)$, the face generated by $\mu$ consists of the probability measures with essentially bounded density with respect to $\mu$, whereas the closed face generated by $\mu$ is the full simplex $\Prob(\supp\mu)$.
\end{remark}

\subsection{Fixed vectors and relative property~\textup{(T)}}
We will first need the following lemma.
\begin{lemma}
\label{lem:l2-fixed}
Let $H\leq G$ and $\tau\in \T(A)^G$.
Then
\[
H_\tau^{U^\tau(H)}=L^2(M_\tau^H,\tau).
\]
In particular,
\[
M_\tau^H=\C
\quad\Longleftrightarrow\quad
H_\tau^{U^\tau(H)}=\C\xi_\tau.
\]
\end{lemma}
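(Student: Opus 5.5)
The natural approach is to identify $H_\tau=L^2(M_\tau,\tau)$ with the completion of $M_\tau$ under $\normtwo{\cdot}{\tau}$, where $x\in M_\tau$ corresponds to $x\xi_\tau$. Under this identification the unitary $U^\tau_g$ acts by $U^\tau_g(x\xi_\tau)=\alpha_g(x)\xi_\tau$ for all $x\in M_\tau$. This holds on the dense set $\pi_\tau(A)\xi_\tau$ by definition, and it extends to $M_\tau$ because the induced action is trace-preserving and the Kaplansky density theorem lets us approximate in $\normtwo{\cdot}{\tau}$.

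The inclusion $L^2(M_\tau^H,\tau)\subseteq H_\tau^{U^\tau(H)}$ is then immediate. If $x\in M_\tau^H$, then $U^\tau_h x\xi_\tau=\alpha_h(x)\xi_\tau=x\xi_\tau$. The fixed space is closed, so it also contains the closure $L^2(M_\tau^H,\tau)$ of $M_\tau^H\xi_\tau$.

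The reverse inclusion is the substantive step. Let $\zeta\in H_\tau$ be $U^\tau(H)$-fixed. I would show that $\zeta$ is a limit of bounded $H$-fixed vectors via the $\tau$-preserving conditional expectation $E\colon M_\tau\to M_\tau^H$. Since $M_\tau^H$ is a von Neumann subalgebra of a finite von Neumann algebra with faithful normal trace, $E$ exists, and its $L^2$-extension is the orthogonal projection $P$ onto $L^2(M_\tau^H,\tau)$.

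\begin{itemize}
\item First I would check that $P$ commutes with each $U^\tau_h$. Indeed, $\alpha_h$ preserves $M_\tau^H$ pointwise and preserves $\tau$, so $E\circ\alpha_h=E$. Hence $PU^\tau_h=P$, and taking adjoints gives $U^\tau_h P=P$.
\item It then remains to show that every fixed vector lies in the range of $P$. Here I would use the mean ergodic theorem only in a weak sense: the closed convex hull $K$ of $\{U^\tau_h\eta:h\in H\}$ contains a unique element of minimal norm, and this element is $H$-fixed. Every element of $K$ has the same image under $P$.
\item The cleaner route is to describe the fixed space through an averaging that stays inside the bounded part. Take $x\in M_\tau$ and consider the $\normtwo{\cdot}{\tau}$-closed convex hull $K_x$ of $\{\alpha_h(x)\}$ in $L^2$. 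This set is contained in the image of the operator-norm ball $\norm{x}$-ball of $M_\tau$, which is $\normtwo{\cdot}{\tau}$-closed. Its minimal-norm element $y$ is therefore in $M_\tau$, is $H$-invariant, and satisfies $y=Px$, since the fixed vector lies in $K_x$ and $P$ is constant on $K_x$.
\end{itemize}

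Now take a fixed vector $\zeta$ and approximate it by $x_n\xi_\tau$ with $x_n\in M_\tau$. The orthogonal projection $Q$ onto the fixed space commutes with all $U_h$ and is determined by the minimal-norm element of the hull. By the previous step, $Qx_n\xi_\tau=y_n\xi_\tau\in L^2(M_\tau^H)$. Then $\zeta=Q\zeta=\lim y_n\xi_\tau$, which lies in $L^2(M_\tau^H,\tau)$.

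The main obstacle is the closedness of the operator-norm ball of $M_\tau$ in $L^2$. This is standard for finite von Neumann algebras with a faithful normal trace: a $\normtwo{\cdot}{\tau}$-limit of a bounded sequence converges strongly in the standard representation, and the limit of a bounded net is a bounded operator commuting with $M_\tau'$.

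The final equivalence follows directly. We have $M_\tau^H=\C$ if and only if $L^2(M_\tau^H,\tau)=\C\xi_\tau$, because the map $x\mapsto x\xi_\tau$ is injective by faithfulness of $\tau$.
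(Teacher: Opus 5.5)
Your proposal is correct and follows essentially the paper's proof. Both take the minimal-norm point of the closed convex hull of $\{U^\tau_h x\xi_\tau : h\in H\}$ and note that the projection onto the fixed space is constant on this hull. Both then use that the image of the $\norm{x}$-ball of $M_\tau$ in $L^2(M_\tau,\tau)$ is $\normtwo{\cdot}{\tau}$-closed (the paper checks this inline via $\sigma$-weak compactness and weak convergence) to get $y\in M_\tau^H$ with $y\xi_\tau$ equal to the projection of $x\xi_\tau$, and conclude by density. Your detour through the conditional expectation onto $M_\tau^H$ is harmless but unnecessary, since only the projection $Q$ onto the fixed space is used.
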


\begin{proof}
Set $N=M_\tau^H$, and let $P$ be the orthogonal projection of $H_\tau$ onto
$H_\tau^{U^\tau(H)}$. If $y\in N$, then for every $h\in H$,
\[
U_h^\tau(y\xi_\tau)=\alpha_h(y)\xi_\tau=y\xi_\tau,
\]
so $N\xi_\tau\subseteq H_\tau^{U^\tau(H)}$. Since $N\xi_\tau$ is dense in
$L^2(N,\tau)$, it follows that
\[
L^2(N,\tau)\subseteq H_\tau^{U^\tau(H)}.
\]
It remains to prove the reverse inclusion.

Fix $x\in M_\tau$, and set
\[
C:=\overline{\operatorname{conv}}\{U_h^\tau(x\xi_\tau):h\in H\}\subseteq H_\tau.
\]
Then $C$ is a nonempty closed convex $U^\tau(H)$-invariant subset of $H_\tau$.
Let $\eta_0\in C$ be the unique element of minimal norm. Since each
$U_h^\tau$ is unitary and $C$ is $U^\tau(H)$-invariant, uniqueness implies
that $\eta_0$ is $H$-fixed.

For each $h\in H$ we have
\[
P(U_h^\tau(x\xi_\tau))=P(x\xi_\tau),
\]
so $P$ is constant on the convex hull of the orbit, and hence, by continuity,
on $C$. Applying this to $\eta_0$ yields
\(
\eta_0=P\eta_0=P(x\xi_\tau),
\)
and therefore
\[
P(x\xi_\tau)\in \overline{\operatorname{conv}}\{U_h^\tau(x\xi_\tau):h\in H\}.
\]

Thus there exists a net $(y_i)$ in
$\operatorname{conv}\{\alpha_h(x):h\in H\}\subseteq M_\tau$ such that
\[
\norm{y_i\xi_\tau-P(x\xi_\tau)}\longrightarrow 0.
\]
Since $\norm{y_i}\leq \norm{x}$ for all $i$, the net $(y_i)$ is bounded in
$M_\tau$. Passing to a subnet, we may assume that $y_i\to y$ $\sigma$-weakly
for some $y\in M_\tau$. Then for every $z\in M_\tau$,
\[
\langle y_i\xi_\tau,z\xi_\tau\rangle=\tau(z^*y_i)\longrightarrow \tau(z^*y)
=\langle y\xi_\tau,z\xi_\tau\rangle,
\]
so $y_i\xi_\tau\to y\xi_\tau$ weakly in $L^2(M_\tau,\tau)$. Since
$y_i\xi_\tau\to P(x\xi_\tau)$ in norm, it follows that
\(
P(x\xi_\tau)=y\xi_\tau.
\) Because $P(x\xi_\tau)$ is $H$-fixed, for every $h\in H$,
\[
\alpha_h(y)\xi_\tau
=
U_h^\tau(y\xi_\tau)
=
U_h^\tau P(x\xi_\tau)
=
P(x\xi_\tau)
=
y\xi_\tau.
\]
Faithfulness of $\tau$ implies $\alpha_h(y)=y$, so $y\in N$. Hence
\[
P(x\xi_\tau)\in L^2(N,\tau).
\]

We have shown that $P(M_\tau\xi_\tau)\subseteq L^2(N,\tau)$. Since
$M_\tau\xi_\tau$ is dense in $H_\tau$, $P$ is continuous, and $L^2(N,\tau)$ is
closed, it follows that
\[
H_\tau^{U^\tau(H)}=\operatorname{ran}P\subseteq L^2(N,\tau).
\]
Combining this with the inclusion proved at the beginning gives
\[
H_\tau^{U^\tau(H)}=L^2(M_\tau^H,\tau).
\]
The final equivalence is immediate.
\end{proof}
Finally, we will need the following form of relative property~\textup{(T)}.

\begin{proposition}
\label{prop:relative-kazhdan-pair}
Let $H\leq G$.
If the pair $(G,H)$ has relative property~\textup{(T)}, then there exist a finite set $F\subseteq G$ and $\kappa>0$ such that for every unitary representation $V\colon G\to \mathcal U(K)$ satisfying that $K^{V(H)}$ is $G$-invariant, and every vector $\eta\in K\ominus K^{V(H)}$,
\[
\max_{g\in F}\norm{V_g\eta-\eta}\geq \kappa\norm{\eta}.
\]
\end{proposition}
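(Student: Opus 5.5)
We will derive this from the standard formulation of relative property~\textup{(T)}, which we take as: there exist a finite set $F\subseteq G$ and $\varepsilon>0$ such that every unitary representation of $G$ with an $(F,\varepsilon)$-invariant unit vector has a nonzero $V(H)$-invariant vector. Equivalently, by the quantitative form due to Jolissaint, for every $\delta>0$ there are $(F,\varepsilon)$ such that any $(F,\varepsilon)$-invariant unit vector lies within $\delta$ of $K^{V(H)}$. We only need the qualitative version, together with the invariance hypothesis on $K^{V(H)}$.

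\textbf{Step 1: reduction to a subrepresentation.} Fix $F$ and $\varepsilon$ as in the qualitative definition, and set $\kappa:=\varepsilon$. Let $V$ be a representation such that $K_0:=K^{V(H)}$ is $G$-invariant. Since $V$ is unitary, $K_1:=K\ominus K_0$ is also $G$-invariant. Let $W:=\restr{V}{K_1}$ be the resulting subrepresentation of $G$. Its $H$-fixed vectors are exactly $K_1\cap K_0=\{0\}$, so $W$ has no nonzero $W(H)$-invariant vector.

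\textbf{Step 2: contrapositive.} Suppose some $\eta\in K_1$ satisfies $\max_{g\in F}\norm{V_g\eta-\eta}<\kappa\norm{\eta}$. Then $\eta\neq0$, and $\eta/\norm{\eta}$ is an $(F,\varepsilon)$-invariant unit vector for $W$. By relative property~\textup{(T)}, $W$ would then have a nonzero $H$-fixed vector, contradicting Step 1. Hence the inequality $\max_{g\in F}\norm{V_g\eta-\eta}\geq\kappa\norm{\eta}$ holds for all $\eta\in K_1$, including $\eta=0$ trivially.

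\textbf{Main point.} The only subtle step is that the constants $(F,\varepsilon)$ are uniform over all representations, which is built into the definition of relative property~\textup{(T)}. For this reason they can be fixed before $V$ is chosen, and the hypothesis that $K^{V(H)}$ is $G$-invariant is used exactly to pass to the subrepresentation $W$. Without that hypothesis, $K\ominus K^{V(H)}$ need not be a representation of $G$, and the argument would instead need the quantitative Jolissaint form.
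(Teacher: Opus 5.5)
Your proposal is correct and is essentially the paper's own argument. Both fix a Kazhdan pair for $(G,H)$, use the $G$-invariance of $K^{V(H)}$ to pass to the subrepresentation on $K\ominus K^{V(H)}$ (which has no nonzero $H$-fixed vectors), and conclude by contradiction via an $(F,\kappa)$-almost invariant unit vector.
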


\begin{proof}
Choose a Kazhdan pair $(F,\kappa)$ for $(G,H)$; see \cite[Definition~1.4.3 and Remark~1.4.4]{BHV}.
Since $K^{V(H)}$ is $G$-invariant, the orthogonal complement $K\ominus K^{V(H)}$ is a $G$-invariant subspace.
The restricted representation on $K\ominus K^{V(H)}$ has no nonzero $H$-fixed vectors.
If the inequality above failed for some nonzero $\eta\in K\ominus K^{V(H)}$, then $\eta/\norm{\eta}$ would be $(F,\kappa)$-almost invariant for a representation with no nonzero $H$-fixed vectors, contradicting the defining property of $(F,\kappa)$.
\end{proof}

\section{Uniform spectral gap and Bauer simplices}
\label{sec:main}

We now prove the basic spectral-gap theorem.

\begin{theorem}
\label{thm:uniform-gap}
Assume that there exist a finite set $F\subseteq G$ and $\varepsilon>0$ such that for every $\tau\in \ext \T(A)^G$ and every $a\in A$,
\begin{equation}
\label{eq:SG}
\max_{g\in F}\normtwo{\alpha_g(a)-a}{\tau}
\geq
\varepsilon \inf_{\lambda\in \C}\normtwo{a-\lambda1}{\tau}.
\end{equation}
Then $\ext \T(A)^G$ is weak-$^*$ closed.
Equivalently, $\T(A)^G$ is a Bauer simplex.
\end{theorem}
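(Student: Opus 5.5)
Take a sequence $\tau_n\in\ext\T(A)^G$ converging weak-$^*$ to some $\tau\in\T(A)^G$. The goal is to show that $\tau$ is extreme; metrizability (Proposition~\ref{prop:fixed-simplex}) makes sequences sufficient.

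\textbf{Step 1: pass the inequality to the limit.} For fixed $a\in A$ and $\lambda\in\C$, the quantities
\[
\normtwo{\alpha_g(a)-a}{\tau_n}^2=\tau_n\bigl((\alpha_g(a)-a)^*(\alpha_g(a)-a)\bigr),
\qquad
\normtwo{a-\lambda 1}{\tau_n}^2,
\]
are values of $\tau_n$ on fixed elements of $A$. They therefore converge to the corresponding quantities for $\tau$. The right side of \eqref{eq:SG} involves an infimum over $\lambda$. I would handle it by noting that the infimum is attained at $\lambda=\tau_n(a)$, with value $\bigl(\tau_n(a^*a)-|\tau_n(a)|^2\bigr)^{1/2}$. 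This expression is continuous in $\tau_n$, and so is the finite maximum over $F$. Hence \eqref{eq:SG} holds for $\tau$ and every $a\in A$.

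\textbf{Step 2: extend the inequality to $M_\tau$.} By Kaplansky density, every $x\in M_\tau$ is a $2$-norm limit of a bounded sequence $\pi_\tau(a_k)$. Since $\alpha_g$ is $\tau$-preserving, it is $2$-norm isometric, so $\alpha_g(\pi_\tau(a_k))\to\alpha_g(x)$ in $2$-norm. Both sides of \eqref{eq:SG} are $2$-norm continuous, so the inequality holds for all $x\in M_\tau$.

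\textbf{Step 3: conclude extremality.} Take $x\in Z(M_\tau)^G$. The left side of \eqref{eq:SG} vanishes for $x$, so $\inf_\lambda\normtwo{x-\lambda1}{\tau}=0$. Faithfulness of $\tau$ then gives $x\in\C1$. Thus $Z(M_\tau)^G=\C1$, and by Corollary~\ref{cor:trace-extreme-criterion} we get $\tau\in\ext\T(A)^G$. In fact the argument gives more, namely $M_\tau^G=\C$, but only the central part is needed. A Choquet simplex with closed extreme boundary is Bauer by definition, which gives the stated equivalence.

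The main point needing care is Step 1. Passing the infimum to the limit requires the explicit formula for the minimizer, since a naive lower-semicontinuity argument would point the wrong direction. With that formula in hand, the remaining steps are routine density arguments.
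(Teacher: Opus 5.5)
Your proof is correct and uses the same ingredients as the paper's: weak-$^*$ continuity of the quantities in \eqref{eq:SG} for fixed $a\in A$ (via $\inf_{\lambda}\normtwo{a-\lambda1}{\tau}^2=\tau(a^*a)-\abs{\tau(a)}^2$), $2$-norm density of $\pi_\tau(A)$ in $M_\tau$, and Corollary~\ref{cor:trace-extreme-criterion}. The paper argues by contradiction, first approximating a non-scalar $z\in Z(M_\tau)^G$ by some $a\in A$ and then transferring to $\tau_n$; you instead pass \eqref{eq:SG} to the limit and then extend it to $M_\tau$, which is the same argument with the two approximations done in the opposite order and which also yields $M_\tau^G=\C$.
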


\begin{proof}
Let $(\tau_n)$ be a sequence in $\ext \T(A)^G$ converging weak-$^*$ to some $\tau\in \T(A)^G$.
Suppose, for contradiction, that $\tau\notin \ext \T(A)^G$.
By Corollary~\ref{cor:trace-extreme-criterion},
\[
Z(M_\tau)^G\neq \C1.
\]
Choose $z\in Z(M_\tau)^G$ self-adjoint and non-scalar.
Replacing $z$ by $z-\tau(z)1$ and rescaling, we may assume
\[
\tau(z)=0,
\qquad
\normtwo{z}{\tau}=1.
\]

Fix $\delta>0$.
Since $\pi_\tau(A)\xi_\tau$ is dense in $H_\tau=L^2(M_\tau,\tau)$, there exists a self-adjoint $a\in A$ such that
\(
\normtwo{a-z}{\tau}<\delta.
\)
Then
\[
\abs{\tau(a)}\leq \normtwo{a-z}{\tau}<\delta,
\qquad
1-\delta<\normtwo{a}{\tau}<1+\delta.
\]
Because $z$ is $G$-fixed,
\[
\normtwo{\alpha_g(a)-a}{\tau}
\leq \normtwo{\alpha_g(a)-z}{\tau}+\normtwo{z-a}{\tau}
=2\normtwo{a-z}{\tau}<2\delta
\]
for every $g\in F$.

Since $F$ is finite and $\tau_n\weakto \tau$, for all sufficiently large $n$ we have
\begin{align*}
\abs{\tau_n(a)}&<2\delta,\\
1-2\delta&<\normtwo{a}{\tau_n}<1+2\delta,\\
\max_{g\in F}\normtwo{\alpha_g(a)-a}{\tau_n}&<3\delta.
\end{align*}
Moreover,
\[
\inf_{\lambda\in \C}\normtwo{a-\lambda1}{\tau_n}^2
=\normtwo{a}{\tau_n}^2-\abs{\tau_n(a)}^2.
\]
Hence, for $\delta$ small enough and all large $n$,
\[
\inf_{\lambda\in \C}\normtwo{a-\lambda1}{\tau_n}\geq \frac12.
\]
Applying \eqref{eq:SG} to $\tau_n$ and $a$, we obtain
\[
3\delta
>
\max_{g\in F}\normtwo{\alpha_g(a)-a}{\tau_n}
\geq
\varepsilon\inf_{\lambda\in \C}\normtwo{a-\lambda1}{\tau_n}
\geq \frac{\varepsilon}{2},
\]
which is impossible for $\delta<\varepsilon/6$.
Therefore $\tau\in \ext\T(A)^G$, as required.
\end{proof}

We now show that this spectral-gap hypothesis is automatic under relative property~\textup{(T)} once one knows that subgroup fixed points are scalar.

\begin{corollary}[Relative noncommutative Glasner--Weiss]
\label{cor:main-relative}
Let $H\leq G$.
Assume that the pair $(G,H)$ has relative property~\textup{(T)} and that
\[
M_\tau^H=\C
\qquad\text{for every }\tau\in \ext \T(A)^G.
\]
Then $\T(A)^G$ is a Bauer simplex.
\end{corollary}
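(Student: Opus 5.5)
The plan is to verify the uniform spectral-gap hypothesis \eqref{eq:SG} of Theorem~\ref{thm:uniform-gap}, using the Kazhdan pair provided by Proposition~\ref{prop:relative-kazhdan-pair}. Take $F\subseteq G$ finite and $\kappa>0$ as in Proposition~\ref{prop:relative-kazhdan-pair}. We will show that \eqref{eq:SG} holds with this $F$ and $\varepsilon=\kappa$, uniformly over all $\tau\in \ext\T(A)^G$. The conclusion then follows directly from Theorem~\ref{thm:uniform-gap}.

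Fix $\tau\in\ext\T(A)^G$ and consider the representation $V=U^\tau$ of $G$ on $K=H_\tau$. By hypothesis $M_\tau^H=\C$, so Lemma~\ref{lem:l2-fixed} gives $K^{V(H)}=\C\xi_\tau$. This subspace is $G$-invariant, since $U^\tau_g\xi_\tau=\pi_\tau(\alpha_g(1))\xi_\tau=\xi_\tau$. Hence the representation satisfies the hypothesis of Proposition~\ref{prop:relative-kazhdan-pair}, and that hypothesis is met for every extremal $\tau$ with the same pair $(F,\kappa)$. This uniformity is the point of the argument: the Kazhdan pair depends only on $(G,H)$ and not on $\tau$.

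Now let $a\in A$ and set $\eta:=\pi_\tau(a)\xi_\tau-\tau(a)\xi_\tau$. This vector is orthogonal to $\xi_\tau$, so $\eta\in K\ominus K^{V(H)}$. Its norm is
\[
\norm{\eta}=\normtwo{a-\tau(a)1}{\tau}=\inf_{\lambda\in\C}\normtwo{a-\lambda1}{\tau},
\]
because the orthogonal projection onto $\C\xi_\tau$ realises the infimum. Since $V_g\xi_\tau=\xi_\tau$, we get $V_g\eta-\eta=\pi_\tau(\alpha_g(a)-a)\xi_\tau$, whose norm is $\normtwo{\alpha_g(a)-a}{\tau}$. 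Proposition~\ref{prop:relative-kazhdan-pair} then yields exactly \eqref{eq:SG} with $\varepsilon=\kappa$.

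There is no real obstacle here. The only care needed is to check that the fixed subspace is $G$-invariant, which is what Proposition~\ref{prop:relative-kazhdan-pair} requires, and that the constants are independent of $\tau$. Both points follow immediately from Lemma~\ref{lem:l2-fixed} and the ergodicity assumption.
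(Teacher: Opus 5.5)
Your proposal is correct and follows the paper's proof essentially line for line: you use Lemma~\ref{lem:l2-fixed} to get $H_\tau^{U^\tau(H)}=\C\xi_\tau$, check that this subspace is $G$-invariant, apply the Kazhdan pair from Proposition~\ref{prop:relative-kazhdan-pair} to the centered vector $\eta=\pi_\tau(a)\xi_\tau-\tau(a)\xi_\tau$, and conclude via Theorem~\ref{thm:uniform-gap} with $\varepsilon=\kappa$. You also spell out why $\C\xi_\tau$ is $G$-invariant and why $(F,\kappa)$ does not depend on $\tau$, two points the paper states only briefly.
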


\begin{proof}
By Lemma~\ref{lem:l2-fixed}, the hypothesis is equivalent to
\[
H_\tau^{U^\tau(H)}=\C\xi_\tau
\qquad (\tau\in \ext \T(A)^G).
\]
Let $F\subseteq G$ and $\kappa>0$ be as in Proposition~\ref{prop:relative-kazhdan-pair}.
Fix $\tau\in \ext \T(A)^G$ and $a\in A$.
Set
\[
\eta:=\pi_\tau(a)\xi_\tau-\tau(a)\xi_\tau.
\]
Then $\eta\perp \xi_\tau$, hence $\eta\in H_\tau\ominus H_\tau^{U^\tau(H)}$.
Since $\C\xi_\tau$ is $G$-invariant, Proposition~\ref{prop:relative-kazhdan-pair} applies to the representation $U^\tau$.
For $g\in G$,
\[
U^\tau_g\eta-\eta=\pi_\tau(\alpha_g(a)-a)\xi_\tau,
\]
so Proposition~\ref{prop:relative-kazhdan-pair} gives
\[
\max_{g\in F}\normtwo{\alpha_g(a)-a}{\tau}
=\max_{g\in F}\norm{U^\tau_g\eta-\eta}
\geq \kappa\norm{\eta}.
\]
Since
\[
\norm{\eta}=\inf_{\lambda\in \C}\normtwo{a-\lambda1}{\tau},
\]
condition \eqref{eq:SG} holds.
The conclusion follows from Theorem~\ref{thm:uniform-gap}.
\end{proof}

In particular, by taking $H = G$, one gets:

\begin{corollary}[Noncommutative Glasner--Weiss for traces]
\label{cor:main-trace}
Assume that $G$ has property~\textup{(T)}, and that
\[
M_\tau^G=\C
\qquad\text{for every }\tau\in \ext \T(A)^G.
\]
Then $\T(A)^G$ is a Bauer simplex.
\end{corollary}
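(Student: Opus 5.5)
This is the special case $H=G$ of Corollary~\ref{cor:main-relative}, so the plan is to reduce to it directly rather than reprove anything.

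First, I check that the hypotheses match. Property~\textup{(T)} for $G$ means exactly that the pair $(G,G)$ has relative property~\textup{(T)*}: a representation with almost invariant vectors has a nonzero invariant vector. The ergodicity assumption $M_\tau^G=\C$ for every $\tau\in\ext\T(A)^G$ is the fixed-point hypothesis of Corollary~\ref{cor:main-relative} with $H=G$. Invoking that corollary then gives the Bauer conclusion.

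For completeness, I would also indicate how the steps specialize, which is a check rather than a real obstacle.
\begin{enumerate}
\item By Lemma~\ref{lem:l2-fixed}, we have $H_\tau^{U^\tau(G)}=\C\xi_\tau$ for each extremal invariant trace $\tau$.
\item This fixed space is trivially $G$-invariant. Hence Proposition~\ref{prop:relative-kazhdan-pair}, applied with a Kazhdan pair $(F,\kappa)$ for $G$, yields $\max_{g\in F}\normtwo{\alpha_g(a)-a}{\tau}\ge\kappa\,\inf_\lambda\normtwo{a-\lambda1}{\tau}$.
\item This is the uniform spectral gap \eqref{eq:SG} with $\varepsilon=\kappa$, independent of $\tau$. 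Theorem~\ref{thm:uniform-gap} then shows that $\ext\T(A)^G$ is closed.
\end{enumerate}

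The only point needing care is the uniformity of $(F,\kappa)$ across all $\tau$. This is built into the definition of a Kazhdan pair, which does not depend on the representation. So I expect no genuine obstacle: the proof is a one-line specialization of Corollary~\ref{cor:main-relative}.
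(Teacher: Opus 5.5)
Your proposal is correct and matches the paper exactly: the corollary is obtained by taking $H=G$ in Corollary~\ref{cor:main-relative}, since property~\textup{(T)} for $G$ is the same as relative property~\textup{(T)} for the pair $(G,G)$. Your unpacked steps faithfully reproduce the proof of that corollary in this special case: Lemma~\ref{lem:l2-fixed}, then the representation-independent Kazhdan pair from Proposition~\ref{prop:relative-kazhdan-pair}, then Theorem~\ref{thm:uniform-gap}.
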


\begin{remark}
\label{rem:central-detection}
Let $\tau\in \ext \T(A)^G$.
Since Corollary~\ref{cor:trace-extreme-criterion} gives $Z(M_\tau)^G=\C$, the condition $M_\tau^G=\C$ may be replaced by the weaker inclusion
\[
M_\tau^G\subseteq Z(M_\tau).
\]
This is often the most convenient form in applications.
\end{remark}

\section{Criteria forcing scalar or central fixed points}
\label{sec:criteria}

This section gives three concrete mechanisms implying the hypotheses of Corollary~\ref{cor:main-relative} or Corollary~\ref{cor:main-trace}.

\subsection{A displacement criterion}

\begin{proposition}
\label{prop:displacement-central}
Assume that there exists a norm-dense $*$-subalgebra $A_{\mathrm{loc}}\subseteq A$ with the following property:
for every finite sets $\Omega,\mathcal K\subseteq A_{\mathrm{loc}}$, there exists $g\in G$ such that
\[
[\alpha_g(a),b]=0
\qquad (a\in \Omega,\ b\in \mathcal K).
\]
Then, for every $\tau\in \T(A)^G$,
\[
M_\tau^G\subseteq Z(M_\tau).
\]
Consequently, if $G$ has property~\textup{(T)} then $\T(A)^G$ is a Bauer simplex.
\end{proposition}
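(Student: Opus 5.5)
The plan is to show that a $G$-fixed element $x\in M_\tau^G$ commutes with every $\pi_\tau(b)$, $b\in A_{\mathrm{loc}}$, by approximating $x$ in $2$-norm by a local element $a$, moving $a$ far away with some $\alpha_g$ so that it commutes with $b$, and using that $\alpha_g$ fixes $x$ and is $2$-norm isometric. The Bauer conclusion then follows from Remark~\ref{rem:central-detection} together with Corollary~\ref{cor:main-trace}.

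\textbf{Step 1.} Fix $\tau\in\T(A)^G$, $x\in M_\tau^G$, $b\in A_{\mathrm{loc}}$ and $\delta>0$. Since $A_{\mathrm{loc}}$ is norm dense in $A$ and $\pi_\tau(A)\xi_\tau$ is dense in $H_\tau$, the vectors $\pi_\tau(A_{\mathrm{loc}})\xi_\tau$ are dense. So I choose $a\in A_{\mathrm{loc}}$ with $\normtwo{\pi_\tau(a)-x}{\tau}<\delta$. Here no bound on $\norm{a}$ is needed, because only $\norm{b}$ will enter the estimates.

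\textbf{Step 2.} Apply the hypothesis with $\Omega=\{a\}$ and $\mathcal K=\{b\}$ to obtain $g$ with $[\alpha_g(a),b]=0$. Because $\tau$ is $G$-invariant, the extended action $\alpha_g$ on $M_\tau$ preserves $\normtwo{\cdot}{\tau}$. Since $\alpha_g(x)=x$, this gives $\normtwo{\alpha_g(a)-x}{\tau}<\delta$. Now estimate
\[
\normtwo{[x,\pi_\tau(b)]}{\tau}\le \normtwo{(x-\alpha_g(a))\pi_\tau(b)}{\tau}+\normtwo{\pi_\tau(b)(\alpha_g(a)-x)}{\tau}\le 2\norm{b}\delta .
\]
This uses the tracial bounds $\normtwo{yz}{\tau}\le\norm{z}\normtwo{y}{\tau}$ and $\normtwo{zy}{\tau}\le\norm{z}\normtwo{y}{\tau}$, valid in $L^2(M_\tau,\tau)$. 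Letting $\delta\to0$ and using faithfulness of $\tau$ on $M_\tau$ gives $[x,\pi_\tau(b)]=0$.

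\textbf{Step 3.} By norm density of $A_{\mathrm{loc}}$, the element $x$ commutes with $\pi_\tau(A)$. Hence $x$ lies in $\pi_\tau(A)'\cap M_\tau=Z(M_\tau)$, the equality holding because $M_\tau=\pi_\tau(A)''$.

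\textbf{Step 4.} Assume $G$ has property (T) and let $\tau\in\ext\T(A)^G$. Step 3 gives $M_\tau^G\subseteq Z(M_\tau)$, so $M_\tau^G\subseteq Z(M_\tau)^G$, and this equals $\C$ by Corollary~\ref{cor:trace-extreme-criterion}. Thus the hypothesis of Corollary~\ref{cor:main-trace} holds (this is exactly Remark~\ref{rem:central-detection}), and $\T(A)^G$ is Bauer.

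The only point requiring care is Step 1 together with the $2$-norm estimate. The approximant $a$ need not be uniformly bounded, so the estimate must be arranged so that $a$ appears only inside $2$-norms, with the operator norm falling on $b$. The displayed inequality is set up to do exactly that.
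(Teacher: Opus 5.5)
Your proof is correct and is essentially the paper's argument: approximate $x\in M_\tau^G$ in $\normtwo{\cdot}{\tau}$ by a local element, displace it by some $\alpha_g$ so that it commutes with $b$, use $\alpha_g(x)=x$ and $2$-norm invariance to get $\normtwo{[x,\pi_\tau(b)]}{\tau}\le 2\norm{b}\delta$, and then conclude via Remark~\ref{rem:central-detection} and Corollary~\ref{cor:main-trace}. The differences are only cosmetic: you use a single $\delta$ instead of a sequence $(a_n)$, and you pass from $A_{\mathrm{loc}}$ to $M_\tau$ via norm density and $\pi_\tau(A)'=M_\tau'$ rather than via $\sigma$-weak density.
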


\begin{proof}
Fix $\tau\in \T(A)^G$ and let $x\in M_\tau^G$.
We show that $x$ commutes with $\pi_\tau(A_{\mathrm{loc}})$.
Let $b\in A_{\mathrm{loc}}$ and choose a sequence $(a_n)$ in $A_{\mathrm{loc}}$ such that
\[
\normtwo{x-\pi_\tau(a_n)}{\tau}\longrightarrow 0.
\]
For each $n$, apply the hypothesis to $\Omega=\set{a_n}$ and $\mathcal K=\set{b}$ to obtain $g_n\in G$ with
\[
[\alpha_{g_n}(a_n),b]=0.
\]
Therefore
\[
[\pi_\tau(\alpha_{g_n}(a_n)),\pi_\tau(b)]=0.
\]
Since $x$ is $G$-fixed and $\tau$ is $G$-invariant,
\begin{align*}
\normtwo{[x,\pi_\tau(b)]}{\tau}
&=\normtwo{[\alpha_{g_n}(x),\pi_\tau(b)]}{\tau}\\
&=\normtwo{[\alpha_{g_n}(x)-\pi_\tau(\alpha_{g_n}(a_n)),\pi_\tau(b)]}{\tau}\\
&\leq 2\norm{b}\,\normtwo{x-\pi_\tau(a_n)}{\tau}.
\end{align*}
Letting $n\to\infty$ gives $[x,\pi_\tau(b)]=0$.
Since $\pi_\tau(A_{\mathrm{loc}})$ is $\sigma$-weakly dense in $M_\tau$, it follows that $x\in Z(M_\tau)$.
This proves the stated inclusion.

If $G$ has property~\textup{(T)} then Remark~\ref{rem:central-detection} shows that
\[
M_\tau^G=\C
\qquad (\tau\in \ext \T(A)^G),
\]
and Corollary~\ref{cor:main-trace} yields that $\T(A)^G$ is Bauer.
\end{proof}

\begin{corollary}[Quasi-local permutation actions]
\label{cor:quasi-local}
Let $I$ be a countable $G$-set.
Assume that $A$ contains a norm-dense $*$-subalgebra
\[
A_{\mathrm{loc}}=\bigcup_{F\subseteq I\text{ finite}} A_F
\]
such that:
\begin{enumerate}[label=\textup{(\roman*)}]
\item $A_F\subseteq A_K$ whenever $F\subseteq K$;
\item $\alpha_g(A_F)=A_{gF}$ for every finite $F\subseteq I$ and every $g\in G$;
\item $[A_F,A_K]=0$ whenever $F\cap K=\varnothing$.
\end{enumerate}
Assume moreover that the action of $G$ on $I$ displaces finite subsets, in the sense that for every finite $F,K\subseteq I$ there exists $g\in G$ with $gF\cap K=\varnothing$.
Then, for every $\tau\in \T(A)^G$,
\[
M_\tau^G\subseteq Z(M_\tau).
\]
Consequently, if $G$ has property~\textup{(T)} then $\T(A)^G$ is a Bauer simplex.
\end{corollary}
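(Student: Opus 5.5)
The plan is to reduce the statement to Proposition~\ref{prop:displacement-central}, applied with the given dense $*$-subalgebra $A_{\mathrm{loc}}=\bigcup_F A_F$. By (i) the family $\{A_F\}$ is upward directed, and each $A_F$ is, as the notation suggests, a $*$-subalgebra. Hence the union is a $*$-subalgebra, and its norm-density is assumed. It therefore suffices to verify the commutation hypothesis of Proposition~\ref{prop:displacement-central}. The conclusion $M_\tau^G\subseteq Z(M_\tau)$ and the Bauer statement under property~\textup{(T)} then follow verbatim from that proposition, which itself goes through Remark~\ref{rem:central-detection} and Corollary~\ref{cor:main-trace}.

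To verify the hypothesis, let $\Omega,\mathcal K\subseteq A_{\mathrm{loc}}$ be finite. For each $a\in\Omega$ choose a finite $F_a\subseteq I$ with $a\in A_{F_a}$, and for each $b\in\mathcal K$ choose a finite $K_b$ with $b\in A_{K_b}$. Set
\[
F:=\bigcup_{a\in\Omega}F_a,\qquad K:=\bigcup_{b\in\mathcal K}K_b,
\]
both finite. By (i), $\Omega\subseteq A_F$ and $\mathcal K\subseteq A_K$. The displacement hypothesis provides $g\in G$ with $gF\cap K=\varnothing$. By (ii), $\alpha_g(\Omega)\subseteq\alpha_g(A_F)=A_{gF}$, and by (iii), $[A_{gF},A_K]=0$. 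Consequently $[\alpha_g(a),b]=0$ for all $a\in\Omega$ and $b\in\mathcal K$, which is exactly the hypothesis needed.

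No step is a genuine obstacle. The only points needing care are the passage from finitely many supports to a single finite support, which uses the monotonicity (i), and confirming that $A_{\mathrm{loc}}$ is indeed a $*$-subalgebra, which uses directedness of the union. Everything else is direct bookkeeping feeding into Proposition~\ref{prop:displacement-central}.
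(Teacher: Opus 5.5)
Your proposal is correct and follows the paper's proof: reduce to Proposition~\ref{prop:displacement-central}, use (i) to place $\Omega$ and $\mathcal K$ inside some $A_F$ and $A_K$, displace $F$ off $K$, and conclude with (ii) and (iii). Building $F$ and $K$ as unions just makes explicit the step the paper writes as ``by (i)''. Your check that $A_{\mathrm{loc}}$ is a $*$-subalgebra is harmless but not needed, since the statement assumes it.
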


\begin{proof}
It suffices to verify the hypothesis of Proposition~\ref{prop:displacement-central}.
Let $\Omega,\mathcal K\subseteq A_{\mathrm{loc}}$ be finite.
By \textup{(i)}, there exist finite subsets $F,K\subseteq I$ such that
\[
\Omega\subseteq A_F,
\qquad
\mathcal K\subseteq A_K.
\]
By the displacement assumption, choose $g\in G$ with $gF\cap K=\varnothing$.
Then \textup{(ii)} and \textup{(iii)} imply
\[
[\alpha_g(a),b]=0
\qquad (a\in \Omega,\ b\in \mathcal K).
\]
Proposition~\ref{prop:displacement-central} now applies.
\end{proof}

\subsection{An inner-core criterion}

\begin{proposition}
\label{prop:inner-core}
Assume that there exist a subgroup $H\leq G$ and unitaries $(u_h)_{h\in H}\subseteq U(A)$ such that
\[
\alpha_h=\Ad(u_h)
\qquad (h\in H).
\]
Suppose furthermore that for every $\tau\in \ext \T(A)^G$ one has
\[
W^*(\pi_\tau(u_h):h\in H)'\cap M_\tau\subseteq Z(M_\tau).
\]
Then
\[
M_\tau^G=\C
\qquad\text{for every }\tau\in \ext \T(A)^G.
\]
In particular, if $G$ has property~\textup{(T)}, then $\T(A)^G$ is a Bauer simplex.
\end{proposition}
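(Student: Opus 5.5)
The plan is to show directly that every $G$-fixed element of $M_\tau$ commutes with all the implementing unitaries $\pi_\tau(u_h)$. The hypothesis then places it in the centre, and Corollary~\ref{cor:trace-extreme-criterion} makes it a scalar.

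Fix $\tau\in \ext \T(A)^G$ and let $x\in M_\tau^G$. First I check that the induced action on $M_\tau$ satisfies
\[
\alpha_h=\Ad(\pi_\tau(u_h))\quad\text{on } M_\tau \qquad (h\in H).
\]
For $a\in A$ we have $\pi_\tau(\alpha_h(a))=\pi_\tau(u_h)\pi_\tau(a)\pi_\tau(u_h)^*$. The extended automorphism $\alpha_h$ of $M_\tau$ is the one implemented by $U^\tau_h$, so it is normal, i.e.\ $\sigma$-weakly continuous. The map $\Ad(\pi_\tau(u_h))$ is also $\sigma$-weakly continuous. Both agree on the $\sigma$-weakly dense subalgebra $\pi_\tau(A)$, hence they agree on $M_\tau$.

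Next, since $x$ is fixed by $\alpha_h$ for $h\in H$, we get $\pi_\tau(u_h)x\pi_\tau(u_h)^*=x$. So $x$ commutes with every $\pi_\tau(u_h)$, and by taking adjoints also with every $\pi_\tau(u_h)^*$. Therefore $x$ commutes with the von Neumann algebra $W^*(\pi_\tau(u_h):h\in H)$, which gives
\[
x\in W^*(\pi_\tau(u_h):h\in H)'\cap M_\tau\subseteq Z(M_\tau).
\]

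Combining this with $G$-invariance, $x\in Z(M_\tau)^G$. Corollary~\ref{cor:trace-extreme-criterion} says $Z(M_\tau)^G=\C1$ for extremal $\tau$, so $x$ is a scalar. Hence $M_\tau^G=\C$ for every $\tau\in\ext\T(A)^G$. When $G$ has property~\textup{(T)}, Corollary~\ref{cor:main-trace} then yields that $\T(A)^G$ is Bauer.

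The only point needing care is the identification of the extended action with $\Ad(\pi_\tau(u_h))$ on all of $M_\tau$, and this is handled by the normality and density argument above. I expect no real obstacle.
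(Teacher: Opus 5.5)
Your proposal is correct and follows the paper's proof: an $H$-fixed $x\in M_\tau^G$ commutes with the implementing unitaries $\pi_\tau(u_h)$, hence lies in the relative commutant and so in $Z(M_\tau)^G=\C1$, after which Corollary~\ref{cor:main-trace} gives the Bauer conclusion. Your normality-and-density check that the extended action equals $\Ad(\pi_\tau(u_h))$ on all of $M_\tau$ makes explicit a step the paper states without comment.
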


\begin{proof}
Fix $\tau\in \ext \T(A)^G$ and let $x\in M_\tau^G$.
Then $x$ is fixed in particular by the subgroup $H$.
Since the action of $h\in H$ on $M_\tau$ is implemented by $\pi_\tau(u_h)$, we have
\[
\pi_\tau(u_h)x\pi_\tau(u_h)^*=x
\qquad (h\in H).
\]
Thus
\[
x\in W^*(\pi_\tau(u_h):h\in H)'\cap M_\tau\subseteq Z(M_\tau).
\]
Because $x$ is also $G$-fixed, Corollary~\ref{cor:trace-extreme-criterion} gives
\[
x\in Z(M_\tau)^G=\C1.
\]
Hence $M_\tau^G=\C$.
The final assertion follows from Corollary~\ref{cor:main-trace}.
\end{proof}

This proves Theorem \ref{thm:intro-inner} by taking $H = \langle S \rangle.$

\begin{remark}
\label{rem:inner-special-case}
If the implementing unitaries generate $M_\tau$ for every $\tau\in \ext \T(A)^G$, then the commutant hypothesis in Proposition~\ref{prop:inner-core} is automatic, since
\[
W^*(\pi_\tau(u_h):h\in H)'\cap M_\tau=M_\tau'\cap M_\tau=Z(M_\tau).
\]
In the special case where $H=G$, the action is implemented by a unitary representation $u\colon G\to U(A)$ and $A=C^*(u_g:g\in G)$, the Bauer conclusion also follows formally from the fact that $A$ is a quotient of $C^*(G)$ and that $\T(C^*(G))$ is Bauer for property~\textup{(T)} groups \cite{LSV}. Proposition~\ref{prop:inner-core} is useful because it also applies when only a subgroup acts innerly or when the implementing unitaries do not generate $A$.
\end{remark}

\subsection{A mixing criterion}

\begin{proposition}
\label{prop:mixing}
Let $H\leq G$ be infinite, and let $\tau\in \T(A)^G$.
Assume that for all $x,y\in M_\tau$ with $\tau(x)=\tau(y)=0$,
\[
\tau\bigl(x\alpha_h(y)\bigr)\longrightarrow 0
\qquad\text{as }h\to\infty\text{ in }H.
\]
Then
\[
M_\tau^H=\C.
\]
\end{proposition}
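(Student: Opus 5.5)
The argument is direct. Take $x\in M_\tau^H$ and set $x_0:=x-\tau(x)1$. Since each $\alpha_h$ is a trace-preserving automorphism of $M_\tau$, we have $\alpha_h(1)=1$, so $x_0$ is again $H$-fixed, and $\tau(x_0)=0$. Applying the mixing hypothesis with $x:=x_0^*$ and $y:=x_0$ is legitimate, because $\tau(x_0^*)=\overline{\tau(x_0)}=0$. It gives
\[
\tau\bigl(x_0^*\alpha_h(x_0)\bigr)\longrightarrow 0\qquad\text{as }h\to\infty\text{ in }H.
\]

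Because $x_0$ is $H$-fixed, $\alpha_h(x_0)=x_0$ for every $h\in H$. The sequence on the left is therefore constant, with value $\tau(x_0^*x_0)=\normtwo{x_0}{\tau}^2$. Here we use that $H$ is infinite: otherwise ``$h\to\infty$ in $H$'' would be an empty limit statement and would carry no information. Since the constant net tends to $0$, we get $\normtwo{x_0}{\tau}=0$.

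Faithfulness of $\tau$ on $M_\tau$ now forces $x_0=0$, i.e.\ $x=\tau(x)1\in\C1$. Hence $M_\tau^H=\C$.

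There is essentially no obstacle. The only points requiring care are that the centered element remains fixed and centered, that the limit along the infinite group is meaningful, and that the trace is faithful on $M_\tau$. One could alternatively phrase the argument through Lemma~\ref{lem:l2-fixed}, noting that an $H$-fixed vector orthogonal to $\xi_\tau$ has zero matrix coefficient at infinity and hence vanishes. The direct computation above already suffices.
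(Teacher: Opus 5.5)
Your proposal is correct and is essentially the paper's argument. You center the fixed element and use that $\tau\bigl(x_0^*\alpha_h(x_0)\bigr)$ is constant, equal to $\normtwo{x_0}{\tau}^2$, hence zero. Faithfulness then gives $x_0=0$. The paper pairs $x$ with $x^*$ in the other order, which changes nothing.
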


\begin{proof}
Let $x\in M_\tau^H$.
Replacing $x$ by $x-\tau(x)1$, we may assume that $\tau(x)=0$.
Then for every $h\in H$,
\[
\tau\bigl(x\alpha_h(x^*)\bigr)=\tau(xx^*).
\]
By the mixing hypothesis the left-hand side tends to $0$ as $h\to\infty$ in $H$.
Hence $\tau(xx^*)=0$, and faithfulness of $\tau$ gives $x=0$.
Thus $M_\tau^H=\C$.
\end{proof}

\begin{corollary}
\label{cor:mixing-relative}
Assume that $\T(A)^G\neq\varnothing$.
Let $H\leq G$ be infinite.
Assume that the pair $(G,H)$ has relative property~\textup{(T)} and that the $H$-mixing hypothesis of Proposition~\ref{prop:mixing} holds for every $\tau\in \ext \T(A)^G$.
Then $\T(A)^G$ is a Bauer simplex.
\end{corollary}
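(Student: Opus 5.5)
This is a direct combination of Proposition~\ref{prop:mixing} with Corollary~\ref{cor:main-relative}. First, for each $\tau\in\ext\T(A)^G$, the $H$-mixing hypothesis holds by assumption. Since $H$ is infinite, the limit ``$h\to\infty$ in $H$'' makes sense: it is taken along the filter of complements of finite subsets of $H$, and this filter is proper. Proposition~\ref{prop:mixing} therefore gives
\[
M_\tau^H=\C \qquad (\tau\in \ext \T(A)^G).
\]

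Second, $(G,H)$ has relative property~\textup{(T)} by hypothesis. Together with the previous step, this is exactly the input required by Corollary~\ref{cor:main-relative}, which yields that $\T(A)^G$ is a Bauer simplex. Internally, that corollary goes through Lemma~\ref{lem:l2-fixed}, Proposition~\ref{prop:relative-kazhdan-pair} and the uniform spectral gap of Theorem~\ref{thm:uniform-gap}.

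The assumption $\T(A)^G\neq\varnothing$ is only needed so that $\T(A)^G$ is a genuine (nonempty) Choquet simplex, by Proposition~\ref{prop:fixed-simplex}, and the Bauer statement is meaningful. A nonempty compact convex set also has extreme points by Krein--Milman, so the hypothesis on $\ext\T(A)^G$ is not vacuous.

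There is no real obstacle here. The only point to check is that the infiniteness of $H$ makes the mixing hypothesis non-degenerate. Otherwise the limit along cofinite sets would be vacuous, and the argument of Proposition~\ref{prop:mixing} could not conclude $\tau(xx^*)=0$.
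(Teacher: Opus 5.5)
Your proposal is correct and matches the paper's proof, which simply combines Proposition~\ref{prop:mixing} (giving $M_\tau^H=\C$ for every $\tau\in\ext\T(A)^G$) with Corollary~\ref{cor:main-relative}. Your side remarks are accurate: infiniteness of $H$ is what makes the limit along cofinite sets meaningful, and nonemptiness of $\T(A)^G$ is what makes the Bauer statement non-vacuous.
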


\begin{proof}
Combine Proposition~\ref{prop:mixing} with Corollary~\ref{cor:main-relative}.
\end{proof}

\begin{remark}
\label{rem:mixing-dense}
In Proposition~\ref{prop:mixing} it is enough to verify the convergence assumption on a $\normtwo{\cdot}{\tau}$-dense $*$-subalgebra of $M_\tau$.
Indeed, the estimate
\[
\abs{\tau\bigl(x\alpha_h(y)\bigr)-\tau\bigl(x'\alpha_h(y')\bigr)}
\leq \normtwo{x-x'}{\tau}\,\normtwo{y}{\tau}+\normtwo{x'}{\tau}\,\normtwo{y-y'}{\tau}
\]
is uniform in $h\in H$ because each $\alpha_h$ preserves the $2$-norm.
\end{remark}

\section{Applications}
\label{sec:applications}

\subsection{Quasi-local permutation actions}

\begin{example}[Generalized Bernoulli actions]
\label{ex:generalized-bernoulli}
Let $I$ be a countable $G$-set and let $D$ be a separable unital $C^*$-algebra.
Set
\[
A=\bigotimes_{i\in I} D,
\]
with the spatial infinite tensor product taken with respect to the unit of $D$, and let $\beta\colon G\curvearrowright A$ be the permutation action.
For a finite subset $F\subseteq I$, let $A_F\subseteq A$ be the  copy of $\bigotimes_{i\in F} D$.
Then
\[
A_{\mathrm{loc}}:=\bigcup_{F\subseteq I\text{ finite}} A_F
\]
is a norm-dense $*$-subalgebra satisfying the hypotheses of Corollary~\ref{cor:quasi-local}.
Consequently, if $G$ has property~\textup{(T)} and the action of $G$ on $I$ displaces finite subsets, then every nonempty invariant trace simplex $\T(A)^G$ is Bauer.
In particular, if $G$ is infinite, taking $I=G$ with the left translation action yields Bernoulli shifts of property~\textup{(T)} groups.
\end{example}

\begin{proposition}[Product traces for Bernoulli actions]
\label{prop:bernoulli-product-extreme}
Let $G\curvearrowright I$ be an action on a countable set, and assume that
there exists an infinite subgroup $H\leq G$ such that, for every finite
subsets $F,K\subseteq I$, the set
\[
\set{h\in H: hK\cap F\neq \varnothing}
\]
is finite.
Let $D$ be a separable unital $C^*$-algebra with $\T(D)\neq\varnothing$, set
\[
A=\bigotimes_{i\in I} D,
\]
and let $\beta\colon G\curvearrowright A$ be the permutation action.
For $\rho\in \T(D)$, let
\[
\tau_\rho:=\bigotimes_{i\in I}\rho.
\]
Then
\(
\tau_\rho\in \ext \T(A)^G.
\)
More precisely, for all $x,y\in M_{\tau_\rho}$ with
$\tau_\rho(x)=\tau_\rho(y)=0$,
\[
\tau_\rho\bigl(x\,\beta_h(y)\bigr)\longrightarrow 0
\qquad\text{as }h\to\infty\text{ in }H,
\]
and hence
\(
M_{\tau_\rho}^H=\C.
\)
\end{proposition}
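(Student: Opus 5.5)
The plan is to verify the mixing hypothesis of Proposition~\ref{prop:mixing} on a $2$-norm dense $*$-subalgebra, extend it to all of $M_{\tau_\rho}$ by Remark~\ref{rem:mixing-dense}, and then deduce both $M_{\tau_\rho}^H=\C$ and extremality. First, $\tau_\rho$ is a well-defined trace on the spatial infinite tensor product. It is the inductive limit of the finite product traces $\rho^{\otimes F}$, each of which is a trace on $A_F$ (the spatial tensor product of traces is a trace), and these are compatible with the inclusions $A_F\subseteq A_K$. It is $G$-invariant because $\beta_g$ maps $A_F$ onto $A_{gF}$ by permuting tensor factors, and $\rho^{\otimes F}$ is carried to $\rho^{\otimes gF}$. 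Hence $\tau_\rho\in \T(A)^G$, and we may use the GNS algebra $M_{\tau_\rho}$.

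The key computation is on $A_{\mathrm{loc}}$. Let $a\in A_F$ and $b\in A_K$ with $\tau_\rho(a)=\tau_\rho(b)=0$. Then $\beta_h(b)\in A_{hK}$. By hypothesis the set of $h\in H$ with $hK\cap F\neq\varnothing$ is finite. Outside this finite set, $a$ and $\beta_h(b)$ live on disjoint tensor factors, so the product structure of $\tau_\rho$ gives
\[
\tau_\rho\bigl(a\beta_h(b)\bigr)=\tau_\rho(a)\,\tau_\rho(\beta_h(b))=\tau_\rho(a)\tau_\rho(b)=0.
\]
This factorization is checked first on elementary tensors and then extended by linearity and continuity. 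So the correlation is eventually zero, not merely small, as $h\to\infty$ in $H$.

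Next we pass to general $x,y\in M_{\tau_\rho}$ with zero trace. The image $\pi_{\tau_\rho}(A_{\mathrm{loc}})$ is a $*$-subalgebra that is $2$-norm dense in $L^2$, so we approximate $x$ and $y$ by elements $a,b$ of $A_{\mathrm{loc}}$. To keep the centering, we replace $a$ by $a-\tau_\rho(a)1$, which costs at most $2\normtwo{x-a}{\tau_\rho}$ because $|\tau_\rho(a)|=|\tau_\rho(a-x)|$. The uniform estimate of Remark~\ref{rem:mixing-dense} then gives $\limsup_h|\tau_\rho(x\beta_h(y))|\le\varepsilon$ for every $\varepsilon>0$, hence the limit is $0$. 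This bookkeeping, making sure the approximants stay centered and that the estimate is uniform in $h$, is the only mildly delicate step. Since $H$ is infinite, Proposition~\ref{prop:mixing} now yields $M_{\tau_\rho}^H=\C$.

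Finally, for extremality:
\[
Z(M_{\tau_\rho})^G\subseteq M_{\tau_\rho}^G\subseteq M_{\tau_\rho}^H=\C.
\]
Corollary~\ref{cor:trace-extreme-criterion} then gives $\tau_\rho\in\ext\T(A)^G$. No property (T) is needed for this part of the statement.
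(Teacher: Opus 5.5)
Your proposal is correct and follows the paper's proof essentially step by step: you show that $\tau_\rho(a\beta_h(b))$ vanishes exactly for centered local elements once $h$ lies outside the finite set $\{h\in H: hK\cap F\neq\varnothing\}$, extend this to all centered elements of $M_{\tau_\rho}$ via Remark~\ref{rem:mixing-dense}, and conclude with Proposition~\ref{prop:mixing} and Corollary~\ref{cor:trace-extreme-criterion} through $Z(M_{\tau_\rho})^G\subseteq M_{\tau_\rho}^H=\C$. Your explicit re-centering of the local approximants, at cost at most $2\normtwo{x-a}{\tau_\rho}$, is a detail the paper leaves implicit in that remark, and you handle it correctly.
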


\begin{proof}
First of all, the trace $\tau_\rho$ is clearly $G$-invariant. Now, let $F,K\subseteq I$ be finite, and let $x\in A_F$, $y\in A_K$ satisfy
\(
\tau_\rho(x)=\tau_\rho(y)=0.
\)
Set
\[
E_{F,K}:=\set{h\in H: hK\cap F\neq \varnothing}.
\]
By hypothesis, $E_{F,K}$ is finite.
If $h\notin E_{F,K}$, then $\beta_h(y)\in A_{hK}$ and $hK\cap F=\varnothing$,
so the product trace factorizes across the commuting tensor factors and gives
\[
\tau_\rho\bigl(x\,\beta_h(y)\bigr)
=
\tau_\rho(x)\tau_\rho(\beta_h(y))
=
\tau_\rho(x)\tau_\rho(y)
=
0.
\]
Thus the required convergence holds for centered local elements.
By Remark~\ref{rem:mixing-dense}, it follows for all centered
$x,y\in M_{\tau_\rho}$.

Proposition~\ref{prop:mixing} now gives
\(
M_{\tau_\rho}^H=\C,
\)
hence
\(
M_{\tau_\rho}^G\subseteq M_{\tau_\rho}^H=\C.
\)
Therefore,
\[
Z(M_{\tau_\rho})^G=\C,
\]
and Corollary~\ref{cor:trace-extreme-criterion} then shows that
\(
\tau_\rho\in \ext \T(A)^G.
\)
\end{proof}

\begin{corollary}
\label{cor:bernoulli-trace-embedding}
Let $G$ be an infinite countable discrete group, let $D$ be a separable unital
$C^*$-algebra with $\T(D)\neq\varnothing$, and let
\[
A=\bigotimes_G D
\]
with the Bernoulli shift action of $G$.
Then the map
\[
\T(D)\longrightarrow \ext \T(A)^G,
\qquad
\rho\longmapsto \tau_\rho:=\bigotimes_G \rho,
\]
is a continuous injection.
In particular, its image is a compact subset of $\ext \T(A)^G$.

If, in addition, $G$ has property~\textup{(T)}, then $\T(A)^G$ is Bauer.
\end{corollary}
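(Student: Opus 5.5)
The statement splits into three parts: the map lands in $\ext \T(A)^G$, it is continuous and injective, and it is Bauer under property~\textup{(T)}. Compactness of the image is then automatic, since $\T(D)$ is weak-$^*$ compact and the map is continuous into a Hausdorff space.

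\emph{Landing in the extreme boundary.} I apply Proposition~\ref{prop:bernoulli-product-extreme} with $I=G$ under left translation and $H=G$, which is infinite by assumption. The hypothesis to check is that for finite $F,K\subseteq G$ the set $\set{h\in G: hK\cap F\neq\varnothing}$ is finite. This set is contained in $FK^{-1}$, which is finite. Hence $\tau_\rho\in\ext\T(A)^G$ for every $\rho\in\T(D)$.

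\emph{Injectivity.} Fix the coordinate $e\in G$ and consider the embedding $\iota_e\colon D\to A_{\{e\}}\subseteq A$. By the definition of the product trace, $\tau_\rho\circ\iota_e=\rho$. So $\rho$ is recovered from $\tau_\rho$, and the map is injective.

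\emph{Continuity.} This is the only step needing care, and it remains routine. Both $\T(D)$ and $\T(A)$ carry the weak-$^*$ topology, and $\T(D)$ is metrizable, so sequences suffice. Suppose $\rho_n\to\rho$. Every element of $\T(A)$ is a state of norm one, so the family $\{\tau_{\rho_n}\}$ is uniformly bounded. It therefore suffices to prove $\tau_{\rho_n}(a)\to\tau_\rho(a)$ for $a$ in the norm-dense subalgebra $A_{\mathrm{loc}}$, and by linearity for elementary tensors $a=\bigotimes_{i\in F}d_i$ with $F$ finite. On such tensors,
\[
\tau_{\rho_n}(a)=\prod_{i\in F}\rho_n(d_i)\longrightarrow\prod_{i\in F}\rho(d_i)=\tau_\rho(a),
\]
because the product is finite. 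Linear spans of elementary tensors are dense in each $A_F$, so the convergence extends to all of $A$ by a standard $\varepsilon/3$ argument.

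\emph{Bauer property.} If $G$ has property~\textup{(T)}, then $\T(A)^G$ is nonempty because it contains $\tau_\rho$ for any $\rho\in\T(D)$. Left translation displaces finite subsets: given finite $F,K$, choose $g\notin KF^{-1}$, which is possible since $G$ is infinite; then $gF\cap K=\varnothing$. Example~\ref{ex:generalized-bernoulli} (equivalently Corollary~\ref{cor:quasi-local}) then shows that $\T(A)^G$ is Bauer.
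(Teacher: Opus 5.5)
Your proposal is correct and takes the same route as the paper. It applies Proposition~\ref{prop:bernoulli-product-extreme} with $I=H=G$ via the inclusion in $FK^{-1}$, proves injectivity via the $e$-coordinate, gets continuity from local elementary tensors plus density, and obtains Bauerness from Example~\ref{ex:generalized-bernoulli}. You also make explicit a few points the paper leaves implicit: the displacement check $g\notin KF^{-1}$, the nonemptiness of $\T(A)^G$, and the uniform-boundedness and density step for weak-$^*$ continuity.
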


\begin{proof}
Apply Proposition~\ref{prop:bernoulli-product-extreme} with $I=G$ and with
$H=G$. For finite subsets $F,K\subseteq G$, one has
\[
\set{h\in G: hK\cap F\neq \varnothing}\subseteq FK^{-1},
\]
so the hypothesis is satisfied.

To see injectivity, let $e\in G$ be the identity element and, for $a\in D$,
let $a^{(e)}\in A$ denote the element with $a$ in the $e$-coordinate and $1$
in all other coordinates.
Then
\[
\tau_\rho(a^{(e)})=\rho(a),
\]
so $\rho\mapsto \tau_\rho$ is injective.

For continuity, observe that on every local elementary tensor
\[
a_1\otimes\cdots\otimes a_n
\]
one has
\[
\tau_\rho(a_1\otimes\cdots\otimes a_n)
=
\prod_{k=1}^n \rho(a_k),
\]
which depends continuously on $\rho$.
Since local tensors are norm-dense in $A$, the map is weak-$^*$ continuous.
Because $\T(D)$ is compact and $\T(A)^G$ is Hausdorff, the map is a
homeomorphism onto its image.

If $G$ has property~\textup{(T)}, Bauer-ness follows from
Example~\ref{ex:generalized-bernoulli}.
\end{proof}
\begin{example}[A Bauer simplex with a large extreme boundary]
\label{ex:bernoulli-poulsen-boundary}
Let $G$ be an infinite countable discrete group with property~\textup{(T)}, and set
\[
A=\bigotimes_G C^*(F_2)
\]
with the Bernoulli shift action.
By Corollary~\ref{cor:bernoulli-trace-embedding}, the simplex $\T(A)^G$ is Bauer
and its extreme boundary contains a compact subset homeomorphic to
\[
\T(C^*(F_2)).
\]
By \cite{OSV}, the latter is the Poulsen simplex.
Thus $\ext \T(A)^G$ contains a compact copy of the Poulsen simplex.
\end{example}

\begin{corollary}
\label{cor:direct-sum-group}
Let $I$ be a countable $G$-set and let $\Delta$ be a countable discrete group.
Set
\[
\Gamma:=\bigoplus_I \Delta,
\]
and let $G$ act on $\Gamma$ by permuting coordinates.
Assume that $G$ has property~\textup{(T)} and that the action on $I$ displaces finite subsets.
Then the simplex of $G$-invariant traces on $C^*(\Gamma)$ is Bauer.
The same conclusion holds for $C_r^*(\Gamma)$.
\end{corollary}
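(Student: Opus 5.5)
We will deduce the statement from Corollary~\ref{cor:quasi-local}, applied separately to $A=C^*(\Gamma)$ and to $A=C_r^*(\Gamma)$. First, $\T(A)^G$ is nonempty in both cases. The canonical trace $\tau_\Gamma(\lambda_\gamma)=\delta_{\gamma,e}$ is $G$-invariant because the permutation action preserves the identity. On $C^*(\Gamma)$ the trivial character is also $G$-invariant. The permutation action $G\curvearrowright\Gamma$ is by group automorphisms, so it induces actions by $*$-automorphisms on both $C^*(\Gamma)$ and $C_r^*(\Gamma)$. In the reduced case, the automorphism of $\Gamma$ gives a unitary on $\ell^2(\Gamma)$ that conjugates $\lambda$ to $\lambda\circ g$.

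Next we set up the local structure. For finite $F\subseteq I$, let $\Gamma_F:=\bigoplus_{i\in F}\Delta\leq\Gamma$, viewed as the elements supported in $F$. Let $A_F$ be the $*$-subalgebra of $A$ spanned by the unitaries $u_\gamma$ with $\gamma\in\Gamma_F$; these are $u_\gamma$ in the full case and $\lambda_\gamma$ in the reduced case. We take $A_F$ algebraic rather than closed, which suffices. Then
\[
A_{\mathrm{loc}}=\bigcup_F A_F=\C[\Gamma],
\]
since every element of $\Gamma$ has finite support. This is norm-dense in both completions. We now check the three conditions:
\begin{enumerate}[label=\textup{(\roman*)}]
\item $F\subseteq K$ gives $\Gamma_F\subseteq\Gamma_K$, hence $A_F\subseteq A_K$.
\item $g\cdot\Gamma_F=\Gamma_{gF}$, because $g$ moves the coordinate in position $i$ to position $gi$. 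Hence $\alpha_g(A_F)=A_{gF}$.
\item If $F\cap K=\varnothing$, then elements of $\Gamma_F$ and $\Gamma_K$ commute inside the direct sum. Their unitaries therefore commute, so $[A_F,A_K]=0$.
\end{enumerate}

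With these conditions, property~\textup{(T)} of $G$, and the displacement hypothesis on $I$, Corollary~\ref{cor:quasi-local} gives that $\T(A)^G$ is Bauer in both cases. Alternatively, the reduced case follows from the full case. $\T(C_r^*(\Gamma))^G$ is a closed face of $\T(C^*(\Gamma))^G$, since traces factoring through the quotient form a closed face (the quotient kernel being an ideal). A closed face of a Bauer simplex is Bauer, because its extreme points are the extreme points of the simplex that lie in the face, and these form a closed set. However, the direct application of Corollary~\ref{cor:quasi-local} already suffices.

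The only step needing care is (ii): we must check that the permutation action is indexed compatibly, namely $(g\cdot\gamma)_i=\gamma_{g^{-1}i}$. Then the support of $g\cdot\gamma$ is $g\,\supp\gamma$, which gives $\alpha_g(A_F)=A_{gF}$ exactly. Everything else is formal.
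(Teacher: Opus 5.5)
Your proof is correct and is essentially the paper's argument: take $A_F=\C[\Gamma_F]$, note that $A_{\mathrm{loc}}=\C[\Gamma]$ is norm-dense, verify conditions (i)--(iii), and invoke Corollary~\ref{cor:quasi-local}, treating the reduced case identically. Your alternative route to the reduced case is also valid but not needed: the traces vanishing on the kernel of $C^*(\Gamma)\to C_r^*(\Gamma)$ form a closed face, and a closed face of a Bauer simplex is Bauer.
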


\begin{proof}
We treat $C^*(\Gamma)$; the reduced case is identical.
For each finite $F\subseteq I$, let $\Gamma_F\leq \Gamma$ be the subgroup of elements supported in $F$, and let
\[
A_F:=\C[\Gamma_F]\subseteq \C[\Gamma]\subseteq C^*(\Gamma)
\]
be its group algebra.
Since every element of $\Gamma$ has finite support,
\[
A_{\mathrm{loc}}:=\bigcup_{F\subseteq I\text{ finite}} A_F=\C[\Gamma]
\]
is norm-dense in $C^*(\Gamma)$.
Moreover, $\alpha_g(A_F)=A_{gF}$ for every $g\in G$, and if $F\cap K=\varnothing$, then the subgroups $\Gamma_F$ and $\Gamma_K$ commute elementwise, so $[A_F,A_K]=0$.
Thus Corollary~\ref{cor:quasi-local} applies.
\end{proof}

\subsection{Invariant traces on group \texorpdfstring{$C^*$}{C-star}-algebras}

The next corollary recovers the invariant group-trace theorem from \cite[Thm 1.9]{LSV} in the language developed here.

\begin{corollary}[ Invariant traces on groups]
\label{cor:group-traces}
Let $\Gamma$ and $\Lambda$ be countable discrete groups, and let $\theta\colon \Lambda\to \Aut(\Gamma)$ be an action.
Assume that
\[
\Inn(\Gamma)\leq \theta(\Lambda)
\]
and that $\Lambda$ has property~\textup{(T)}.
Then the simplex of $\Lambda$-invariant traces on $C^*(\Gamma)$ is Bauer.
The same conclusion holds for $C_r^*(\Gamma)$.
\end{corollary}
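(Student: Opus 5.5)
We plan to apply Proposition~\ref{prop:inner-core} with $G$ replaced by $\Lambda$, acting on $A=C^*(\Gamma)$ (or $C_r^*(\Gamma)$) through the automorphisms induced by $\theta$. For $\lambda\in\Lambda$, write $\alpha_\lambda$ for the automorphism extending $u_\gamma\mapsto u_{\theta_\lambda(\gamma)}$. This extension exists for the full group $C^*$-algebra by universality. For the reduced one it exists because $\theta_\lambda$ induces a unitary on $\ell^2(\Gamma)$ that conjugates $\lambda_\gamma$ to $\lambda_{\theta_\lambda(\gamma)}$.

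Set $H:=\theta^{-1}(\Inn(\Gamma))\leq\Lambda$. For $h\in H$, choose $\gamma_h\in\Gamma$ with $\theta_h=\Ad(\gamma_h)$. Then $\alpha_h=\Ad(u_{\gamma_h})$ on the generators $u_\gamma$, and hence on all of $A$. So $(u_h)_{h\in H}:=(u_{\gamma_h})$ is a family of implementing unitaries. We do not need it to be a homomorphism; the choice is modulo the centre of $\Gamma$, and Proposition~\ref{prop:inner-core} only asks that each $\alpha_h$ be implemented by $u_h$.

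The key point is the commutant hypothesis. By the assumption $\Inn(\Gamma)\leq\theta(\Lambda)$, every $\gamma\in\Gamma$ arises as some $\gamma_h$ up to a central element. More precisely, given $\gamma$, pick $h$ with $\theta_h=\Ad(\gamma)$ and choose $\gamma_h=\gamma$, which is allowed since the choice of implementing element is free. Then $\{u_h:h\in H\}=\{u_\gamma:\gamma\in\Gamma\}$.

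Defining the family this way, possibly indexing over pairs to make it a map from $H$, requires a little care. If $H\to\Gamma$ cannot be chosen surjectively because $\theta$ is not injective, we simply note the proof of Proposition~\ref{prop:inner-core} only uses that elements of $M_\tau^\Lambda$ commute with the implementing unitaries. Any $x\in M_\tau^\Lambda$ satisfies $\pi_\tau(u_\gamma)x\pi_\tau(u_\gamma)^*=\alpha_h(x)=x$ for every $\gamma$, with $h$ chosen so that $\theta_h=\Ad(\gamma)$. Since the $u_\gamma$ generate $A$, they generate $M_\tau$, so $x\in M_\tau'\cap M_\tau=Z(M_\tau)$, as in Remark~\ref{rem:inner-special-case}.

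Thus $M_\tau^\Lambda\subseteq Z(M_\tau)$ for every $\Lambda$-invariant trace. Remark~\ref{rem:central-detection} together with Corollary~\ref{cor:trace-extreme-criterion} then gives $M_\tau^\Lambda=\C$ for extremal $\tau$. Corollary~\ref{cor:main-trace}, using property (T) of $\Lambda$, yields the Bauer conclusion for both $C^*(\Gamma)$ and $C_r^*(\Gamma)$.

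The only mild obstacle is the bookkeeping of non-canonical implementing unitaries. Since only pointwise conjugation invariance is used, it is harmless.
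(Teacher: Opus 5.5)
Your proof is correct and, once you drop the detour through Proposition~\ref{prop:inner-core}, it is exactly the paper's argument: every $x\in M_\tau^\Lambda$ commutes with each $\pi_\tau(u_\gamma)$ because $\Ad(\gamma)\in\theta(\Lambda)$, these unitaries generate $M_\tau$, so $x\in Z(M_\tau)^\Lambda=\C$ for extremal $\tau$, and Corollary~\ref{cor:main-trace} finishes. (Minor slip: your claim $\{u_h:h\in H\}=\{u_\gamma:\gamma\in\Gamma\}$ can fail when $\Gamma$ has nontrivial centre, since each $h$ receives only one $\gamma_h$; this obstruction comes from the centre, not from non-injectivity of $\theta$, but your direct fallback argument makes it irrelevant.)
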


\begin{proof}
We treat $C^*(\Gamma)$; the reduced case is identical.
Let $A=C^*(\Gamma)$ and let $\alpha$ be the induced action of $\Lambda$ on $A$.
Fix $\tau\in \ext \T(A)^\Lambda$ and write $M_\tau=\pi_\tau(A)''$.
Let $u_\gamma\in A$ denote the canonical unitary corresponding to $\gamma\in \Gamma$.

If $x\in M_\tau^\Lambda$, then $x$ is fixed by every inner automorphism of $\Gamma$, because $\Inn(\Gamma)\leq \theta(\Lambda)$.
Equivalently,
\[
\pi_\tau(u_\gamma)x\pi_\tau(u_\gamma)^*=x
\qquad (\gamma\in \Gamma).
\]
Hence $x$ commutes with every $\pi_\tau(u_\gamma)$.
Since these unitaries generate $M_\tau$, it follows that $x\in Z(M_\tau)$.
By Remark~\ref{rem:central-detection}, we obtain $M_\tau^\Lambda=\C$.
Corollary~\ref{cor:main-trace} now applies.
\end{proof}

\subsection{Reduced crossed products}

Our crossed-product applications rely on certain trace-extension theorems.
Ursu gives a general description of traces on crossed products \cite{Ursu}.
For our purposes, we only need the following completely formal consequence of
uniqueness of tracial extension.

Let $B_\star=A\rtimes_{\alpha,\star} G$ for $\star\in\{r,u\}$, and let
\[
E_\star\colon B_\star\to A
\]
denote the canonical conditional expectation.

\begin{proposition}[Uniqueness principle]
\label{prop:crossed-homeo}
Assume that every $\tau\in \T(A)^G$ has a unique tracial extension to $B_\star$.
Then restriction induces an affine homeomorphism
\[
\mathrm{res}\colon \T(B_\star)\to \T(A)^G,
\qquad
\mathrm{res}(\varphi)=\restr{\varphi}{A},
\]
whose inverse is
\[
\mathrm{ext}\colon \T(A)^G\to \T(B_\star),
\qquad
\mathrm{ext}(\tau)=\tau\circ E_\star.
\]
\end{proposition}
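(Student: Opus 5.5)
The plan is to check directly that the two maps are well defined, continuous, affine, and mutually inverse. Compactness will then upgrade the continuous bijection to a homeomorphism.

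\emph{Step 1: restriction lands in $\T(A)^G$.} Let $\varphi\in\T(B_\star)$. Then $\restr{\varphi}{A}$ is a tracial state on $A$, since $A\subseteq B_\star$ unitally. Let $u_g$ be the canonical unitaries in $B_\star$. For $g\in G$ and $a\in A$ we have $\alpha_g(a)=u_gau_g^*$, so the trace property gives $\varphi(\alpha_g(a))=\varphi(a)$. Hence $\mathrm{res}$ maps into $\T(A)^G$. It is clearly affine, and it is weak-$^*$ continuous because it is just restriction of functionals.

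\emph{Step 2: extension lands in $\T(B_\star)$.} Let $\tau\in\T(A)^G$ and put $\psi=\tau\circ E_\star$. Then $\psi$ is a state, because $E_\star$ is a unital completely positive map. To see that it is tracial, it suffices by linearity and continuity to check $\psi(xy)=\psi(yx)$ on the dense $*$-subalgebra spanned by the elements $au_g$.
\begin{itemize}
\item We have $E_\star(au_g\,bu_h)=a\alpha_g(b)\,\delta_{gh,e}$.
\item We also have $E_\star(bu_h\,au_g)=b\alpha_h(a)\,\delta_{hg,e}$.
\item When $h=g^{-1}$, $G$-invariance and traciality of $\tau$ give
\[
\tau(a\alpha_g(b))=\tau\bigl(\alpha_{g^{-1}}(a\alpha_g(b))\bigr)=\tau(\alpha_{g^{-1}}(a)\,b)=\tau(b\,\alpha_{g^{-1}}(a)).
\]
\end{itemize}
So $\psi\in\T(B_\star)$, and $\restr{\psi}{A}=\tau$ because $E_\star$ restricts to the identity on $A$. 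Thus $\mathrm{res}\circ\mathrm{ext}=\id$, and $\mathrm{ext}$ is affine and weak-$^*$ continuous, being composition with a fixed map. Incidentally, this also shows $\mathrm{res}$ is surjective without using the hypothesis.

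\emph{Step 3: injectivity and the inverse.} Let $\varphi\in\T(B_\star)$ and set $\tau=\restr{\varphi}{A}\in\T(A)^G$. Both $\varphi$ and $\tau\circ E_\star$ are tracial extensions of $\tau$. The uniqueness hypothesis then forces $\varphi=\mathrm{ext}(\tau)$, so $\mathrm{ext}\circ\mathrm{res}=\id$. Hence $\mathrm{res}$ is an affine bijection with inverse $\mathrm{ext}$. Both maps are continuous, and in any case $\T(B_\star)$ is compact and $\T(A)^G$ is Hausdorff, so $\mathrm{res}$ is an affine homeomorphism.

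The only step with real content is the trace computation in Step 2, specifically the bookkeeping of the covariance relation. Everything else is formal. One minor point to note: if $\T(A)^G$ were empty, both sides would be empty and the statement holds vacuously.
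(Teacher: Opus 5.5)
Your proof is correct and follows essentially the same route as the paper. Restriction lands in $\T(A)^G$ by the covariance relation, and $\tau\circ E_\star$ is shown to be tracial by the same monomial computation using $G$-invariance and traciality of $\tau$. Uniqueness of tracial extensions then gives $\mathrm{ext}\circ\mathrm{res}=\id$, and your extra remarks (surjectivity of $\mathrm{res}$ without the hypothesis, and the compact-to-Hausdorff upgrade) are valid but not needed.
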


\begin{proof}
If $\varphi\in \T(B_\star)$, then $\restr{\varphi}{A}\in \T(A)^G$: it is a trace
on $A$, and for $a\in A$ and $g\in G$,
\[
\restr{\varphi}{A}(\alpha_g(a))
=
\varphi(u_gau_g^*)
=
\varphi(a).
\]

Conversely, let $\tau\in \T(A)^G$.
We claim that $\tau\circ E_\star$ is tracial on $B_\star$.
Since the algebraic crossed product is dense, it suffices to check this on
monomials.
For $a,b\in A$ and $g,h\in G$,
\[
(a u_g)(b u_h)=a\alpha_g(b)u_{gh}.
\]
If $gh\neq e$, then
\[
(\tau\circ E_\star)\bigl((a u_g)(b u_h)\bigr)
=
(\tau\circ E_\star)\bigl((b u_h)(a u_g)\bigr)
=
0.
\]
If $h=g^{-1}$, then
\begin{align*}
(\tau\circ E_\star)\bigl((a u_g)(b u_{g^{-1}})\bigr)
&=\tau\bigl(a\alpha_g(b)\bigr)\\
&=\tau\bigl(\alpha_{g^{-1}}(a)b\bigr)\\
&=\tau\bigl(b\alpha_{g^{-1}}(a)\bigr)\\
&=(\tau\circ E_\star)\bigl((b u_{g^{-1}})(a u_g)\bigr),
\end{align*}
using $G$-invariance and traciality of $\tau$.
Thus $\tau\circ E_\star\in \T(B_\star)$.

The maps $\mathrm{res}$ and $\mathrm{ext}$ are therefore well-defined,
continuous, and affine, and clearly
\[
\mathrm{res}\circ \mathrm{ext}=\id_{\T(A)^G}.
\]
By uniqueness, every $\varphi\in \T(B_\star)$ is the unique tracial extension of
$\restr{\varphi}{A}$, so
\[
\varphi=\restr{\varphi}{A}\circ E_\star=\mathrm{ext}(\mathrm{res}(\varphi)).
\]
Hence $\mathrm{res}$ and $\mathrm{ext}$ are inverse affine homeomorphisms.
\end{proof}

For reduced crossed products, the uniqueness hypothesis is supplied by
Bryder--Kennedy when the amenable radical is trivial.
Thus the next statement is precisely the correspondence they prove.

\begin{corollary}[Bryder--Kennedy]
\label{cor:reduced-homeo-radical}
Assume that $\Rad(G)=\{e\}$.
Then restriction induces an affine homeomorphism
\[
\T(A\rtimes_{\alpha,r} G)\cong \T(A)^G.
\]
In particular, $A\rtimes_{\alpha,r} G$ has Bauer trace simplex if and only if
$\T(A)^G$ does.
\end{corollary}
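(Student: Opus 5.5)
The plan is to combine Proposition~\ref{prop:crossed-homeo} with the Bryder--Kennedy uniqueness theorem. The statement has two parts: the affine homeomorphism, and the equivalence of Bauerness.

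For the first part, I will invoke the result of \cite{BryderKennedy}. For a countable discrete group $G$ with $\Rad(G)=\{e\}$ and any action on a unital $C^*$-algebra, it says the following: every tracial state on $A\rtimes_{\alpha,r}G$ is of the form $\tau\circ E_r$ with $\tau=\restr{\varphi}{A}$. Equivalently, every $G$-invariant trace on $A$ has exactly one tracial extension to the reduced crossed product. This is precisely the hypothesis of Proposition~\ref{prop:crossed-homeo} with $\star=r$. Existence of an extension is already shown in that proof, since $\tau\circ E_r$ is tracial. So the only external input is uniqueness: any tracial $\varphi$ vanishes on $au_g$ for $g\neq e$.

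I would state this carefully, because Bryder--Kennedy phrase it as ``$\varphi=\varphi\circ E_r$ for every tracial $\varphi$.'' That is equivalent, since two tracial extensions $\varphi_1,\varphi_2$ of $\tau$ then both equal $\tau\circ E_r$. Proposition~\ref{prop:crossed-homeo} then yields the affine homeomorphism $\mathrm{res}\colon\T(A\rtimes_{\alpha,r}G)\to\T(A)^G$ with inverse $\tau\mapsto\tau\circ E_r$.

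For the second part, I will use that an affine homeomorphism between compact convex sets maps extreme points bijectively onto extreme points, because affine bijections preserve nontrivial convex decompositions. Being a homeomorphism, it maps closed subsets of the extreme boundary to closed subsets. Hence $\ext\T(A\rtimes_{\alpha,r}G)$ is closed if and only if $\ext\T(A)^G$ is closed. Both are Choquet simplices: the first as a trace space, the second by Proposition~\ref{prop:fixed-simplex}. Moreover the two sets are empty simultaneously, so the degenerate case is consistent on both sides.

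The only real obstacle is matching the exact formulation in \cite{BryderKennedy}, namely that it applies to all unital $A$ with the reduced crossed product and trivial amenable radical. Once that is confirmed, everything else is formal.
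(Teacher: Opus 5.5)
Your proposal is correct and follows the paper's proof. The paper likewise cites Bryder--Kennedy to get $\varphi=\restr{\varphi}{A}\circ E_r$ for every tracial state, reads this as uniqueness of tracial extensions, and applies Proposition~\ref{prop:crossed-homeo}; your paragraph on affine homeomorphisms preserving the extreme boundary and its closedness just spells out the ``in particular'' clause that the paper leaves implicit.
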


\begin{proof}
By \cite[Theorem~1.3 and Corollary~1.4]{BryderKennedy}, every tracial state
$\varphi$ on $A\rtimes_{\alpha,r} G$ satisfies
\[
\varphi=\restr{\varphi}{A}\circ E_r.
\]
Equivalently, every $G$-invariant trace on $A$ has a unique tracial extension to
$A\rtimes_{\alpha,r} G$.
Proposition~\ref{prop:crossed-homeo} now applies.
\end{proof}

We now combine
Corollary~\ref{cor:reduced-homeo-radical} with the Bauer results for invariant
trace simplices proved above.

\begin{corollary}[Permutational wreath products]
\label{cor:wreath}
Let $I$ be a countable $G$-set and let $\Delta$ be a countable discrete group.
Assume that $G$ has property~\textup{(T)}, that $\Rad(G)=\{e\}$, and that the action of $G$ on $I$ displaces finite subsets.
Then the reduced group $C^*$-algebra
\[
C_r^*\bigl((\bigoplus_I \Delta)\rtimes G\bigr)
\]
has Bauer trace simplex.
In particular, if $G$ is infinite and $I=G$ with the left translation action, then
\[
C_r^*(\Delta\wr G)
\]
has Bauer trace simplex.
\end{corollary}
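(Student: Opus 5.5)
The plan is to write $\Gamma:=\bigoplus_I\Delta$ with $G$ acting by permuting coordinates, so that
\[
C_r^*\bigl(\Gamma\rtimes G\bigr)\cong C_r^*(\Gamma)\rtimes_{\alpha,r} G,
\]
where $\alpha$ is the induced action of $G$ on $C_r^*(\Gamma)$ by $\alpha_g(\lambda_\gamma)=\lambda_{g\cdot\gamma}$. This is the standard identification of the reduced group $C^*$-algebra of a semidirect product with the reduced crossed product of the reduced $C^*$-algebra of the normal subgroup. To justify it, I would observe that the regular representation of $\Gamma\rtimes G$ on $\ell^2(\Gamma\rtimes G)\cong \ell^2(\Gamma)\otimes\ell^2(G)$ is unitarily equivalent to the regular covariant representation induced from $\lambda_\Gamma$. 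Hence the two completions of the common dense algebra $\C[\Gamma]\rtimes_{\mathrm{alg}} G=\C[\Gamma\rtimes G]$ agree.

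Since $\Rad(G)=\{e\}$, Corollary~\ref{cor:reduced-homeo-radical} applies with $A=C_r^*(\Gamma)$. It gives an affine homeomorphism
\[
\T\bigl(C_r^*(\Gamma\rtimes G)\bigr)\cong \T(C_r^*(\Gamma))^G,
\]
so it suffices to show that $\T(C_r^*(\Gamma))^G$ is Bauer. This is exactly the reduced case of Corollary~\ref{cor:direct-sum-group}, which uses that $G$ has property~\textup{(T)} and that $G\curvearrowright I$ displaces finite subsets. For Bauerness to be meaningful, the simplex must be nonempty. Here this is clear, since the canonical trace $\delta_e$ on $C_r^*(\Gamma)$ is $G$-invariant, and it is needed anyway for the homeomorphism to be a statement about simplices.

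For the particular case, I take $I=G$ with left translation, so that $(\bigoplus_G\Delta)\rtimes G=\Delta\wr G$. It remains only to verify displacement when $G$ is infinite. Given finite $F,K\subseteq G$, the set $\set{g: gF\cap K\neq\varnothing}\subseteq KF^{-1}$ is finite, so some $g\in G$ lies outside it, and then $gF\cap K=\varnothing$.

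The only step requiring care is the identification of the reduced group algebra with the reduced crossed product, together with checking that the action used in Corollary~\ref{cor:reduced-homeo-radical} is the coordinate-permutation action appearing in Corollary~\ref{cor:direct-sum-group}. Both are standard, so I expect no real obstacle. The remaining steps are direct invocations of earlier results.
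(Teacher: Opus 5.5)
Your proposal is correct and is essentially the paper's argument: combine the reduced case of Corollary~\ref{cor:direct-sum-group} with the Bryder--Kennedy identification of Corollary~\ref{cor:reduced-homeo-radical}, transported along $C_r^*(\Gamma\rtimes G)\cong C_r^*(\Gamma)\rtimes_r G$, and then check displacement for left translation when $G$ is infinite. Your added remarks make explicit what the paper leaves implicit, namely nonemptiness of the simplex via the canonical trace and the bound $\{g: gF\cap K\neq\varnothing\}\subseteq KF^{-1}$.
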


\begin{proof}
Set $\Gamma=\bigoplus_I \Delta$.
By Corollary~\ref{cor:direct-sum-group}, the simplex $\T(C_r^*(\Gamma))^G$
of $G$-invariant traces is Bauer.
Since $\Rad(G)=\{e\}$, Corollary~\ref{cor:reduced-homeo-radical} identifies
\[
\T\bigl(C_r^*(\Gamma)\rtimes_r G\bigr)
\]
affinely with $\T(C_r^*(\Gamma))^G$.
Using the canonical isomorphism
\[
C_r^*(\Gamma\rtimes G)\cong C_r^*(\Gamma)\rtimes_r G,
\]
we conclude that $C_r^*((\bigoplus_I \Delta)\rtimes G)$ has Bauer trace simplex.
The final statement is the special case $I=G$, for which the left translation action displaces finite subsets as soon as $G$ is infinite.
\end{proof}

We conclude with a semidirect-product consequence.

\begin{corollary}
\label{cor:semidirect}
Let $\Gamma$ and $\Lambda$ be countable discrete groups, and let $\theta\colon \Lambda\to \Aut(\Gamma)$ be an action.
Assume that
\[
\Inn(\Gamma)\leq \theta(\Lambda),
\qquad
\Rad(\Lambda)=\{e\},
\]
and that $\Lambda$ has property~\textup{(T)}.
Then the reduced group $C^*$-algebra
\[
C_r^*(\Gamma\rtimes_\theta \Lambda)
\]
has Bauer trace simplex.
\end{corollary}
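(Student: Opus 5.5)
The plan mirrors the proof of Corollary~\ref{cor:wreath}: first establish Bauerness of the invariant trace simplex on the reduced group algebra of $\Gamma$, then transport it to the reduced crossed product via Bryder--Kennedy.

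\emph{Step 1 (invariant traces).} Let $\alpha$ be the action of $\Lambda$ on $A:=C_r^*(\Gamma)$ induced by $\theta$; each $\theta(\lambda)$ is an automorphism of $\Gamma$ and so extends to an automorphism of $C_r^*(\Gamma)$, since it permutes the canonical basis of $\ell^2(\Gamma)$. By hypothesis $\Inn(\Gamma)\leq\theta(\Lambda)$ and $\Lambda$ has property~\textup{(T)}, so Corollary~\ref{cor:group-traces}, in its reduced version, shows that $\T(C_r^*(\Gamma))^\Lambda$ is a Bauer simplex. We should make sure this simplex is nonempty, since Bauerness is a statement about a nonempty simplex. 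The canonical trace $\tau_\Gamma(u_\gamma)=\delta_{\gamma,e}$ is $\Lambda$-invariant, because automorphisms fix $e$, so nonemptiness is automatic.

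\emph{Step 2 (crossed product).} Since $\Rad(\Lambda)=\{e\}$, Corollary~\ref{cor:reduced-homeo-radical} gives an affine homeomorphism
\[
\T\bigl(C_r^*(\Gamma)\rtimes_{\alpha,r}\Lambda\bigr)\cong \T(C_r^*(\Gamma))^\Lambda .
\]
An affine homeomorphism of compact convex sets maps extreme points to extreme points, so closedness of the extreme boundary transfers and the crossed product has Bauer trace simplex.

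\emph{Step 3 (identification).} We use the canonical isomorphism $C_r^*(\Gamma\rtimes_\theta\Lambda)\cong C_r^*(\Gamma)\rtimes_{\alpha,r}\Lambda$. It follows from the unitary identification $\ell^2(\Gamma\rtimes\Lambda)\cong\ell^2(\Gamma)\otimes\ell^2(\Lambda)$, which intertwines the left regular representation of $\Gamma\rtimes\Lambda$ with the regular covariant representation $\bigl(\lambda_\Gamma\otimes 1,\ (\lambda_\Gamma\circ\theta)\text{-twisted}\ 1\otimes\lambda_\Lambda\bigr)$.

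None of these steps is a serious obstacle, since all the work is done in the cited results. The only point needing care is the reduced case of Corollary~\ref{cor:group-traces}, whose proof was stated to be identical to the full case. Inner automorphisms of $\Gamma$ are implemented by $\mathrm{Ad}\,\lambda_\gamma$ inside $C_r^*(\Gamma)$, and the unitaries $\pi_\tau(\lambda_\gamma)$ generate $M_\tau$. Hence every $\Lambda$-fixed element of $M_\tau$ is central, and Remark~\ref{rem:central-detection} together with Corollary~\ref{cor:main-trace} applies.
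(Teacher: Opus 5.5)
Your proposal is correct and follows the paper's own proof: the reduced case of Corollary~\ref{cor:group-traces} makes $\T(C_r^*(\Gamma))^\Lambda$ Bauer, Corollary~\ref{cor:reduced-homeo-radical} transfers this to $C_r^*(\Gamma)\rtimes_{\theta,r}\Lambda$, and the canonical isomorphism with $C_r^*(\Gamma\rtimes_\theta\Lambda)$ finishes. Your extra checks are correct and fill in details the paper leaves implicit: nonemptiness via the canonical trace, and that inner automorphisms are implemented by $\lambda_\gamma$ in the reduced algebra.
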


\begin{proof}
There is a canonical isomorphism
\[
C_r^*(\Gamma\rtimes_\theta \Lambda)\cong C_r^*(\Gamma)\rtimes_{\theta,r}\Lambda.
\]
By Corollary~\ref{cor:group-traces}, the invariant trace simplex $\T(C_r^*(\Gamma))^\Lambda$ is Bauer.
Since $\Rad(\Lambda)=\{e\}$, Corollary~\ref{cor:reduced-homeo-radical} identifies
\[
\T\bigl(C_r^*(\Gamma)\rtimes_{\theta,r}\Lambda\bigr)
\]
affinely with $\T(C_r^*(\Gamma))^\Lambda$.
Hence $C_r^*(\Gamma\rtimes_\theta \Lambda)$ has Bauer trace simplex.
\end{proof}

\bibliographystyle{plain}
\bibliography{refs}

\end{document}